\newcommand{\re}{\mathbb{R}}
\newcommand{\N}{\mathbb{N}}
\newcommand{\A}{\mathcal{A}}
\newcommand{\lmd}{\lambda}
\newcommand{\eps}{\epsilon}
\def\af{\alpha}
\def\rank{\mbox{rank}}
\newcommand{\reff}[1]{(\ref{#1})}
\newcommand{\mc}[1]{\mathcal{#1}}
\newcommand{\mt}[1]{\mathtt{#1}}
\newcommand{\supp}[1]{\mbox{supp}(#1)}
\renewcommand{\vec}[1]{\mathit{vec}(#1)}
\newcommand{\qmod}[1]{\mbox{QM}[#1]}
\newcommand{\ideal}[1]{\mbox{Ideal}[#1]}
\newcommand{\st}{\mathit{s.t.}}
\newcommand{\hm}{\mathit{hom}}
\newcommand{\bdes}{\begin{description}}
	\newcommand{\edes}{\end{description}}
\newcommand{\bal}{\begin{align}}
	\newcommand{\eal}{\end{align}}
\newcommand{\bnum}{\begin{enumerate}}
	\newcommand{\enum}{\end{enumerate}}
\newcommand{\bit}{\begin{itemize}}
	\newcommand{\eit}{\end{itemize}}
\newcommand{\bea}{\begin{eqnarray}}
	\newcommand{\eea}{\end{eqnarray}}
\newcommand{\be}{\begin{equation}}
	\newcommand{\ee}{\end{equation}}
\newcommand{\baray}{\begin{array}}
	\newcommand{\earay}{\end{array}}
\newcommand{\bsry}{\begin{subarray}}
	\newcommand{\esry}{\end{subarray}}
\newcommand{\bca}{\begin{cases}}
	\newcommand{\eca}{\end{cases}}
\newcommand{\bcen}{\begin{center}}
	\newcommand{\ecen}{\end{center}}
\newcommand{\bbm}{\begin{bmatrix}}
	\newcommand{\ebm}{\end{bmatrix}}
\newcommand{\bmx}{\begin{matrix}}
	\newcommand{\emx}{\end{matrix}}
\newcommand{\bpm}{\begin{pmatrix}}
	\newcommand{\epm}{\end{pmatrix}}
\newcommand{\btab}{\begin{tabular}}
	\newcommand{\etab}{\end{tabular}}
\newcommand{\ter}[1]{\textcolor{red}{#1}}
\theoremstyle{plain}
\newtheorem{theorem}{Theorem}[section]
\newtheorem{pro}[theorem]{Proposition}
\newtheorem{lemma}[theorem]{Lemma}
\newtheorem*{claim*}{Claim}
\newtheorem{thm}[theorem]{Theorem}
\newtheorem{dfn}[theorem]{Definition}
\theoremstyle{definition}
\newtheorem{exm}[theorem]{Example}
\newtheorem{algorithm}[theorem]{Algorithm}
\newtheorem{remark}[theorem]{Remark}
\newtheorem{define}[theorem]{Definition}
\numberwithin{equation}{section}
\numberwithin{table}{section}
\def\rn{{\mathbb{R}^n}}
\def\r{{\mathbb{R}}}
\def\rx{{\mathbb{R}[x]}}
\def\a{{\mathcal A}}
\def\pk{{\mathscr{P}_{\mathcal{A}}(K)}}
\def\rk{{\mathscr{R}_{\mathcal{A}}(K)}}
\def\n{{\mathbb{N}}}
\def\n{{\mathbb{N}}}
\def\l{{\mathscr L}}
\begin{document}

\title{Generalized Truncated Moment Problems with Unbounded Sets}

\author[Lei Huang]{Lei Huang}
\address{Lei Huang,
	Institute of Computational Mathematics and Scientific/Engineering Computing,
Academy of Mathematics and Systems Science, Chinese Academy of Sciences,
and School of Mathematical Sciences,
University of Chinese Academy of Sciences, Beijing, China, 100049.}
\email{huanglei@lsec.cc.ac.cn}

\author[Jiawang Nie]{Jiawang~Nie}
\address{Jiawang Nie,  Department of Mathematics,
	University of California San Diego,
	9500 Gilman Drive, La Jolla, CA, USA, 92093.}
\email{njw@math.ucsd.edu}

\author[Ya-xiang Yuan]{Ya-Xiang Yuan}
\address{Ya-Xiang Yuan,
	Institute of Computational Mathematics and Scientific/Engineering Computing,
Academy of Mathematics and Systems Science,
Chinese Academy of Sciences, Beijing, China, 100049.}
\email{yyx@lsec.cc.ac.cn}

\subjclass[2020]{90C23, 90C22, 44A60, 47A57}

\keywords{moment, polynomial, truncated moment problem,
homogenization, Moment-SOS relaxation}

\begin{abstract}
This paper studies generalized truncated moment problems with unbounded sets.
First, we study geometric properties of the truncated moment cone
and its dual cone of nonnegative polynomials. By the technique of homogenization,
we give a convergent hierarchy of Moment-SOS relaxations
for approximating these cones. With them, we give a Moment-SOS method
for solving generalized truncated moment  problems with unbounded sets.
Finitely atomic representing measures, or certificates for their nonexistence,
can be obtained by the proposed method.
Numerical experiments and applications are also given.
\end{abstract}

\maketitle

\section{Introduction}

The generalized truncated moment problem (GTMP) concerns
whether or not there exists a positive Borel measure
which is supported in a prescribed set
and whose moments satisfy some linear equations or inequalities.
For a Borel measure $\mu$ on $\re^n$, its support
is the smallest closed set
$T \subseteq \re^n$ such that $\mu(\re^n\backslash T)=0$.
The support of $\mu$ is denoted as $\supp{\mu}$.
For a power $\alpha \coloneqq (\alpha_{1},\dots,\alpha_{n})$,
the $\af$th moment of $\mu$ is the integral
\[
\int  x_1^{\alpha_{1}} \cdots x_n^{\alpha_{n}} \mt{d} \mu
\]
if it exists. For convenience, we denote that
\[
x  \coloneqq (x_1,\dots,x_n), \quad
x^{\alpha} \coloneqq x_1^{\alpha_{1}}\cdots x_n^{\alpha_{n}}, \quad
|\alpha| \coloneqq  \alpha_{1}+\cdots+\alpha_{n}.
\]
The sum $|\af|$ is called the order of the moment $\int x^\af \mt{d} \mu$.
In applications, the support of $\mu$ is often required to
be contained in a set $K \subseteq \re^n$ such that
\begin{equation} \label{1.1}
K \coloneqq
\left\{x \in \mathbb{R}^{n} \left| \begin{array}{l}
		c_{i}(x)=0~(i \in \mathcal{E}), \\
		c_{j}(x) \geq 0~(j \in \mathcal{I})
	\end{array}\right\},\right.
\end{equation}
where all $c_i, c_j$ are polynomials in $x$.
The $\mathcal{E}$ and $\mathcal{I}$ are label sets for the constraints.
Let $\mc{A} \subset \N^n$ be a finite set of powers.
The generalized truncated moment problem
is often given as: Does there exists a  Borel measure $\mu$ such that
\[
\supp{\mu} \subseteq K, \quad  y_\af = \int x^\af \mt{d} \mu
\]
for every $\af \in \mc{A}$
and the moments $y_\af (\af \in \mc{A})$ satisfy
\be \label{bi=<aiy>}
\left\{\baray{rcl}
\sum\limits_{ \af \in \mc{A} } a_{i, \af} y_{\af}  &=& b_i \,\, (i = 1, \ldots, m_1), \\
\sum\limits_{ \af \in \mc{A} } a_{i, \af} y_{\af}  &\ge&  b_i \,\, (i = m_1+1, \ldots, m) ?
\earay \right.
\ee
In the above, $a_{i, \af}, b_i$ are given constants.
For each $i$, let
\[
a_i(x) \, = \,
\sum\limits_{ \af \in \mc{A} } a_{i, \af} x^{\af} .
\]
Then \reff{bi=<aiy>} is equivalent to
the existence of a Borel measure $\mu$ supported in $K$ such that
\be  \label{aixmu>=bi}
\left\{\baray{rcll}
\int a_i(x) \mt{d} \mu  &=& b_i & (i = 1, \ldots, m_1), \\
\int a_i(x) \mt{d} \mu  &\ge&  b_i & (i = m_1+1, \ldots, m) .
\earay \right.
\ee

How do we determine the existence of a  Borel measure
$\mu$ supported in $K$ and satisfying \reff{aixmu>=bi}?
The GTMP is a fundamental question in optimization
\cite{hkl20,de,LasBk15,laurent2009sums}.
It has broad applications (see \cite{2014The,Nie2015Linear,tang}). The generalized truncated moment optimization problem is studied in
\cite{2008A,Nie2015Linear}.
In applications, we are often interested in finitely atomic measures.
The measure $\mu$ is called finitely atomic
if the support $\supp{\mu}$ is a finite set.
It is called $r$-atomic
if $\supp{\mu}$ consists of $r$ distinct points,
i.e., the cardinality $|\supp{\mu}|=r$.

\subsection{The $\a$-truncated $K$-moment problem}

A special case of the GTMP is   the $\a$-truncated $K$-moment problem ($\a$-TKMP).
For a finite power set $\mc{A} \subseteq \n^n$, we denote by $\re^{\a}$
the set of all real vectors $y$ labelled as
\[
 y =  (y_\af)_{ \af \in \a}.
\]
A vector $y \in \re^{\A}$ is  called an $\A$-truncated multi-sequence ($\a$-tms).
The $\A$-tms $y$ is said to {\it admit}
a Borel measure $\mu$ on $\re^n$ if it holds that
\be  \label{y-meas}
y_\af \, = \, \int x^\af \mt{d} \mu \quad
\mbox{for all} \quad \af \in \A.
\ee
If it satisfies \reff{y-meas}, the measure $\mu$ is called a
{\it representing measure} for $y$
and $y$ is called an $\A$-truncated moment sequence.
The $\A$-truncated moment problem  concerns
the existence and computation of a representing measure $\mu$.
If $\supp{\mu} \subseteq K$, the measure $\mu$ satisfying \reff{y-meas}
is called a $K$-representing measure for $y$.
Denote by $meas(y,K)$ the set of all $K$-representing measures for $y$.
This gives the $\A$-truncated moment cone
\be \label{meas:K-d}
\mathscr{R}_\A(K) \, \, \coloneqq \, \,
\big\{y \in \re^{ \A }:  \,meas(y,K) \ne \emptyset \big\}.
\ee
For the case $y = 0$, its representing measure
is the identically zero measure and the support is the empty set.
Let $\r[x]_{\a}$ be the subspace spanned by monomials $x^{\af}$ ($\alpha \in \a$).
The dual cone of $\mathscr{R}_\A(K)$ is the cone of
polynomials in $\mathbb{R}[x]_{\a}$ that are nonnegative on $K$, i.e.,
\[
\mathscr{P}_{\a}(K)  \, =  \,
\{p \in \rx_{\a} : p(x)\geq 0, \, \forall\,  x \in K \}.
\]
When the polynomials $a_i$ are the monomials $x^\af$ $(\alpha \in \a)$,
the GTMP is reduced to the $\a$-TKMP (see \cite{2014The}).
When $\A$ is the power set (for a degree $d$)
\[
\n^n_d  \, \coloneqq \, \{\alpha \in \n^n:\, |\alpha| \leq d \},
\]
the GTMP is reduced to the classical truncated $K$-moment problem
(see \cite{curto2005truncated}).

\subsection{PSOP and SCP tensors}

Truncated moment problems have broad applications in tensor computation
(see \cite{Nuclear17,NYZ18,NieZhang18}).
For positive integers $m,n$, let $\mathrm{T}^{m}\left(\mathbb{R}^{n}\right)$
denote the space of all order-$m$ tensors over the space $\re^n$.
A tensor $\mathcal{B} \in \mathrm{T}^{m}\left(\mathbb{R}^{n}\right)$
is represented by an $m$-way array
$
\mathcal{B}=\left(\mathcal{B}_{i_{1} \ldots i_{m}}
\right)_{1 \leq i_{1}, \ldots, i_{m} \leq n}
$,
with real entries $\mathcal{B}_{i_{1} \ldots i_{m}}$.
The tensor $\mathcal{B}$ is said to be  symmetric if
the entry $\mathcal{B}_{i_{1} \ldots i_{m}}$ is invariant
under all permutations of $\left(i_{1}, \ldots, i_{m}\right)$. Let $\mathrm{S}^{m}\left(\mathbb{R}^{n}\right)$ be the subspace of
symmetric tensors in $\mathrm{T}^{m}\left(\mathbb{R}^{n}\right)$.
For $u \in \mathbb{R}^{n}$, the outer product $u^{\otimes m}$ denotes  the tensor in $\mathrm{S}^{m}\left(\mathbb{R}^{n}\right)$ such that
for every $(i_{1}, \ldots, i_{m})$,
\[
\left(u^{\otimes m}\right)_{i_{1} \ldots i_{m}}=u_{i_{1}} \cdots u_{i_{m}}.
\]

\begin{define} \label{def:PSOP}
A symmetric tensor $\mathcal{B} \in \mathrm{S}^{m}(\mathbb{R}^{n+1})$
is said to be a positive sum of powers (PSOP) tensor if there exist vectors
$v_{1}, \ldots, v_{r} \in \re^{n}$ and scalars
$\lambda_{1},\dots,\lambda_{r}\geq 0$ such that
\be \label{pnsop}
\mathcal{B} = \sum_{k=1}^{r}\lambda_{k}
\bbm 1\\ v_k \ebm^{\otimes m}.
\ee
If all $v_{1},\dots v_{r}$ are nonnegative vectors
(i.e., all their entries are nonnegative),
then the tensor $\mc{B}$ as in \reff{pnsop}
is said to be strongly completely positive (SCP).
\end{define}

Denote by $\mathrm{PS}^{n+1}_m$ the set of all PSOP tensors in
$\mathrm{S}^{m}(\mathbb{R}^{n+1})$.
Similarly, the set of all SCP tensors in
$\mathrm{S}^{m}(\mathbb{R}^{n+1})$
is denoted as $\mathrm{SCP}^{n+1}_m$.
Every symmetric tensor in $\mathrm{S}^{m}(\mathbb{R}^{n+1})$
is uniquely determined by an $\a$-tms with $\a = \N^n_m.$
More specifically, each symmetric tensor $\mathcal{B}=\left(\mathcal{B}_{i_{1} \ldots i_{m}}
\right)_{0 \leq i_{1}, \ldots, i_{m} \leq n}$  
is uniquely determined by $\mathbf{b} = (b_\af) \in \re^{\a}$ such that
\be \label{tensortms}
b_{\alpha} \, =  \, \mathcal{B}_{i_{1} i_{2} \ldots i_{m}}
\ee
for every $\af \in \N^n_m$ with $x_0^{m-|\alpha|}\cdot x^\alpha
= x_{i_{1}}  \cdots x_{i_{m}}$.
Then, one can see that the decomposition \reff{pnsop} is equivalent to
\be \label{tensormom}
b_{\alpha}  =   \int x^{\alpha} d \mu \,\,
(\forall\, \alpha \in \n^{n}_{m} ),
\ee
where $\mu=\sum_{k=1}^{r}\lambda_{k}\delta_{v_k}$ and $\delta_{v_k}$
denotes the unit Dirac measure supported at $v_k$.
This shows that the tensor $\mathcal{B}$ is PSOP if and only if  $\mathbf{b}$ admits a representing measure on $\rn$.
Similarly, the tensor $\mathcal{B}$ is SCP
if and only if $\mathbf{b}$ admits a representing measure supported in
the nonnegative orthant $\r_+^n$.

When the order $m$ is even, the tensor $\mc{B}$ as in \reff{pnsop}
is a sum of even powers (SOEP) tensor (see \cite{Nuclear17}).
When every $v_k$ is nonnegative, $\mc{B}$ is a completely positive (CP) tensor.
For nonzero PSOP and SCP tensors, the first entry of each decomposing vector
is required to be strictly positive.
Detecting SOEP and CP tensors is equivalent to a homogeneous
truncated moment problem with the set $K$ being the unit sphere \cite{2014The}.
However, this is not the case for PSOP and SCP tenors.
We remark that every SCP tensor is a CP tensor,
but not every CP tensor is SCP.
We refer to \cite{NTYZ22} for introductions to CP tensors.


\subsection{Existing work}

To solve truncated moment problems, a typical method
is to use flat extension (see Section~\ref{Rieszfunc} for the introduction).
We refer to
\cite{curto2002solution,curto2005solution,curto2013recursively,fialkow2008truncated}
for related work.
However, this approach has difficulty for general cases,
especially when the  moment matrix is positive definite.
When the set $K$ is compact, Moment-SOS relaxations can be used to
solve truncated moment problems (see \cite{2008A,2014The,Nie2015Linear}).
When a tms $y$ admits a $K$-representing measure,
it can be obtained by finding a flat extension.
When it admits no $K$-representing measures,
a certificate for the nonexistence can also be obtained
by solving Moment-SOS relaxations.
There are other alternate approaches for studying GTMP.
For instance, the core variety can be used to characterize
existence of representing measures
(see \cite{blekherman2018core,fialkow2017core}).

When $K=\rn$, the homogenization trick is proposed in \cite{Fialkow2012The}
to solve the truncated moment problem for the case $\mc{A} = \N^n_d$.
Homogenization is a useful trick for solving
optimization with unbounded sets \cite{hny21,nie2012discriminants}.
When $K$ is an unbounded semialgebraic set,
there exists little work for solving truncated moment problems.

\subsection*{Contributions}

In this paper, we use homogenization to solve the GTMP
when $K$ is an unbounded set. The main approach is lifting $K$
to a compact semialgebraic set in $\r^{n+1}$ via homogenization.
Then a hierarchy of Moment-SOS relaxations is proposed
to solve the GTMP.  Throughout the  paper, we assume the feasible set
$K$ is closed at $\infty$ (see Definition \ref{closed:inf}).
Our major contributions are:

\bit

\item[I.]
We study  geometric properties of the truncated moment cone $\rk$
and its dual cone $\pk$ of nonnegative polynomials.
A convergent hierarchy of
Moment-SOS approximations is given for $\pk$ and $\rk$.
The convergence is shown under some general assumptions.
\par

\item[II.]
We propose a hierarchy of Moment-SOS relaxations to solve the  GTMP
when $K$ is an unbounded semialgebraic set.
When the closure of the moment system (see \reff{eqq5.6})
admits no $K$-representing measures, we show that
the moment relaxations must be infeasible for all high enough relaxation orders.
When the closure of the moment system is feasible,
the method can obtain a finitely atomic representing measure.
Depending whether or not the first entries of points in the support
are zero, we can either obtain a finitely atomic $K$-representing measure
for the original GTMP or obtain arbitrarily accurate approximations.

\eit

This paper is organized as follows.
Section~\ref{sc:pre} gives some preliminaries.
In Section~\ref{sc:geo}, we give geometric properties of
the cones $\pk$ and $\rk$. Moment-SOS relaxations are also given for them.
In Section~\ref{Generalizedtkmp}, a Moment-SOS algorithm
is given to solve the GTMP. Its convergence is also shown.
Some numerical experiments and applications are presented in Section~\ref{sc:num}.
Some conclusions are given in Section~\ref{sc:con}.

\section{Preliminaries}
\label{sc:pre}

\subsection*{Notation}

Let $\n$ (resp., $\r$) denote the set of nonnegative integers
(resp., real numbers). For $t \in \r$, $\left\lceil t\right\rceil$
denotes the smallest integer greater than or equal to $t$.
For a positive integer $m$, denote $[m] \coloneqq \{1, \ldots, m\}$.
Denote by $\rx$  the ring of real polynomials in $x$.
Let $\rx_d$ be the set of all polynomials with degrees $\leq d$.
For a set $S \subseteq \rn$, $cl(S)$ and $int(S)$
denote its closure and interior  in the Euclidean topology.
The dual cone of $S$ is the set
\[
S^* \, \coloneqq \,
\{y\in \rn \mid  \langle x,y\rangle \geq 0,~ \forall x \in S\}.
\]
When $S$ is a convex set, $ri(S)$
denotes its relative interior.
For a set $V$ of vectors,
$\mbox{Span}\, V$ stands for the subspace spanned by $V$
and $\mbox{Cone}\,V$ stands for the conic hull
generated by vectors in $V$. The notation $\mathrm{dim}\, V$
denotes the dimension of affine  hull of $V$.
We refer to 
\cite{Bert} for these notions.

For a polynomial $p \in \re[x]$, $\deg(p)$ denotes its total degree in $x$ and
$p^\hm$ denotes the homogeneous part of the highest degree terms of $p$.
Denote by $\tilde{p}$  the homogenization of $p$, i.e., $\tilde{p}\coloneqq x_0^{\deg(p)}p(\frac{x}{x_0})$.
For a finite power set $\mc{A}$ of $\n^n$, the degree of $\A$,
denoted by  $\deg(\A)$, is $ \max\{\left|\alpha\right|: \af \in \a\}$.
Let $[x]_{\a}$ be the column vector of monomials $\left(x^{\alpha}\right)_{\alpha \in \a}$.
For a set $K \subseteq \re^n$,
denote by $I(K)_{\a}$  the set of all polynomials in $\r[x]_{\a}$
which identically vanish on $K$, i.e.,
\be \label{def:I(K)A}
I(K)_{\a} \, = \, \{p\in \r[x]_{\a}: p\left|_{ K}\right.  \equiv 0\}.
\ee
Clearly, $I(K)_{\a}$ is a subspace of $\rx_{\a}$.
When $\a=\n^n_d$, we denote that
\[
\mathscr{P}_{d}(K) \, \coloneqq \,  \mathscr{P}_{\a}(K), \quad
\mathscr{R}_{d}(K) \, \coloneqq \,  \mathscr{R}_{\a}(K) .
\]

\subsection{Riesz functionals and  moment matrices}
\label{Rieszfunc}

An $\a$-tms $y \in \re^{ \mc{A} }$ determines the Riesz functional
$\mathscr{L}_{y}$ acting on $\rx_{\a}$ as
\begin{equation*}
\mathscr{L}_{y}(\sum_{\alpha \in \a} p_{\alpha} x^{\alpha})
\,  \coloneqq  \, \sum_{\alpha \in \a} p_{\alpha} y_{\alpha}.
\end{equation*}
For convenience of notation, we also write that
\begin{equation*}
\langle \sum_{\alpha \in \a} p_{\alpha} x^{\alpha}, y \rangle
\,  = \, \sum_{\alpha \in \a} p_{\alpha} y_{\alpha}.
\end{equation*}
The functional $\l_y$ is said to be $K$-positive if
\begin{equation*}
	\l_y(p) \geq 0, ~\forall~ p \in \mathscr{P}_{\a}(K).
\end{equation*}
Furthermore, $\l_y$ is said to be strictly $K$-positive if
\begin{equation*}
	\l_y(p) > 0, ~\forall~ p \in \mathscr{P}_{\a}(K),~
p\left|_{ K}\right. \not \equiv 0.
\end{equation*}
Clearly, if $y$ admits a $K$-representing measure $\mu$,
then $\l_y$ must be $K$-positive, since
$\l_y(p) = \int p(x) \mt{d} \mu \geq 0$
for all $p \in \mathscr{P}_{\a}(K)$.
When $K$ is compact and $\A = \n^n_t$,
it is known that $y \in \mathbb{R}^{\mathbb{N}_{t}^{n}}$ admits a
$K$-representing measure if and only if
the Riesz functional $\mathscr{L}_{y}$ is $K$-positive
(see \cite{laurent2009sums,GTMP}).
This conclusion can be generalized to
any finite-dimensional subspace of $\rx$.

\begin{thm}[\cite{Fialkow2012The}] \label{thm2.1}
Let $K \subseteq \mathbb{R}^{n}$ be a compact set
and let $H$ be a finite dimensional subspace of $\mathbb{R}[x]$.
Suppose there exists $f  \in H$ such that $ f >0$ on $K$.
For a linear functional $\mathscr{L}: H \rightarrow \mathbb{R}$,
if $\mathscr{L}$ is $K$-positive, i.e.,
\[
p \in H,\left.\quad p\right|_{K} \geqslant 0
\quad \Rightarrow \quad \mathscr{L}(p) \geqslant 0,
\]
then there exist $m \leqslant \dim H, \, u_{1}, \ldots, u_{m} \in K,$
and $\lambda_{1}, \ldots, \lambda_{m}>0,$ such that
\be \label{meas:pinH}
 \mathscr{L}(p)=\sum_{i=1}^{m} \lambda_{i} p\left(u_{i}\right) \quad
 (\forall \, p \in H).
\ee
\end{thm}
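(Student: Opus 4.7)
The statement is a Richter--Tchakaloff--Fialkow style result, so the plan is to show that $\mathscr{L}$ lies in the finite-dimensional convex cone of evaluation functionals on $K$, and then apply Carath\'eodory's theorem.

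Concretely, I would work inside the dual space $H^*$, which has dimension $d \coloneqq \dim H$. For each $u \in K$, let $ev_u \in H^*$ denote the evaluation functional $p \mapsto p(u)$, and consider the convex cone
\[
\mathscr{C} \, \coloneqq \, \mathrm{Cone}\{ev_u : u \in K\} \, \subseteq \, H^*.
\]
The first step is to show that $\mathscr{L} \in \mathscr{C}$. Suppose not. Since $\mathscr{C}$ is a convex cone, if we can establish that it is closed, then by the Hahn--Banach separation theorem there exists $p \in H$ such that $\langle ev_u, p \rangle = p(u) \geq 0$ for every $u \in K$ while $\mathscr{L}(p) < 0$. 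This $p$ lies in $\mathscr{P}_{\a}(K)$-style nonnegativity on $K$, contradicting $K$-positivity of $\mathscr{L}$. Hence $\mathscr{L} \in \mathscr{C}$, and then Carath\'eodory's theorem applied inside the $d$-dimensional space $H^*$ yields an expression $\mathscr{L} = \sum_{i=1}^m \lambda_i ev_{u_i}$ with $m \leq d$, $u_i \in K$, $\lambda_i \geq 0$; dropping zero weights gives $\lambda_i > 0$ as claimed in \reff{meas:pinH}.

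The key step, and the one that actually uses the hypothesis that some $f \in H$ satisfies $f > 0$ on $K$, is the closedness of $\mathscr{C}$. Take a sequence $L_k \in \mathscr{C}$ with $L_k \to L$ in $H^*$. By Carath\'eodory applied at each stage, we may write $L_k = \sum_{i=1}^{d} \lambda_{k,i} ev_{u_{k,i}}$ with $u_{k,i} \in K$ and $\lambda_{k,i} \geq 0$ (possibly padding with zero weights). Evaluating at $f$,
\[
\sum_{i=1}^{d} \lambda_{k,i} f(u_{k,i}) \, = \, L_k(f) \, \longrightarrow \, L(f) ,
\]
and since $K$ is compact and $f>0$ on $K$, we have $m_f \coloneqq \min_K f > 0$. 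Hence $\sum_{i=1}^d \lambda_{k,i} \leq L_k(f)/m_f$ is bounded, so each $\lambda_{k,i}$ is bounded. The points $u_{k,i}$ lie in the compact set $K$. Passing to a subsequence, $\lambda_{k,i} \to \lambda_i^\star \geq 0$ and $u_{k,i} \to u_i^\star \in K$, and taking limits gives $L = \sum_{i=1}^d \lambda_i^\star ev_{u_i^\star} \in \mathscr{C}$. This establishes closedness.

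The main obstacle is exactly this closedness step: without the positivity witness $f$ the coefficients $\lambda_{k,i}$ could blow up and limits of conic combinations would escape $\mathscr{C}$. Once closedness is in hand, the rest is standard separation plus Carath\'eodory. Note the bound $m \leq \dim H$ follows automatically from Carath\'eodory in $H^*$, and the strict positivity $\lambda_i > 0$ follows by removing any atoms that contribute zero mass.
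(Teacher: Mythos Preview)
The paper does not prove Theorem~\ref{thm2.1}; it is quoted as a known result from \cite{Fialkow2012The} and used as a black box in the preliminaries. So there is no ``paper's own proof'' to compare against.

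That said, your argument is correct and is precisely the standard Richter--Tchakaloff route that underlies the cited result: identify the cone $\mathscr{C}=\mathrm{Cone}\{ev_u:u\in K\}\subseteq H^*$, use the strictly positive $f\in H$ together with compactness of $K$ to bound coefficients and establish closedness of $\mathscr{C}$, separate if $\mathscr{L}\notin\mathscr{C}$ to contradict $K$-positivity, and finish with conic Carath\'eodory in the $(\dim H)$-dimensional space $H^*$ to get $m\le\dim H$. Each step is sound; in particular your closedness argument correctly isolates why the hypothesis $f>0$ on $K$ is needed.
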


In Theorem \ref{thm2.1}, the equation~\reff{meas:pinH} is equivalent to that
$\mathscr{L}$ admits a finitely atomic measure supported in $K$.
When $K$ is compact and $H = \re[x]_\A$,
solving $\A$-TKMP essentially requires to check
whether $\l_{y}$ is $K$-positive or not.
However, checking the $K$-positivity is typically a hard question
\cite{fialkow2014truncated}.

To solve truncated moment problems, we review
moment and localizing matrices.
For $y \in \r^{\n^n_{2k}}$ and $q \in \rx_{2k}$,
the $k$th order localizing  matrix of $y$ generated by $q$
is the symmetric matrix $L_q^{(k)}$ such that
\[
\mathscr{L}_{y}\left(q p^{2}\right)   =
\vec{p}^{T}\left(L_{q}^{(k)}[y]\right) \vec{p},
\]
for all $p \in \mathbb{R}[x]$ with
$\operatorname{deg}\left(q p^{2}\right) \leq 2 k$.
Here, $\vec{p}$ denotes the coefficient vector of $p$.
In particular, when $q=1$,  $M_k[y] \coloneqq L_q^{(k)}[y]$
is called the $k$th order moment matrix of $y$, which satisfies
\begin{equation*}
\mathscr{L}_{y}\left(p^{2}\right) \,  = \, \vec{p}^{T} M_{k}[y] \vec{p}
\end{equation*}
for all $p \in \mathbb{R}[x]_{k}$.
Let $K$ be as in $(\ref{1.1})$. Denote the degree
\be \label{deg:dK}
d_K  \coloneqq  \max_{i \in \mc{E} \cup \mc{I} }
\left\{1,\left\lceil\operatorname{deg}\left(c_{i}\right) / 2\right\rceil  \right\}.
\ee
If $y$ admits a $K$-representing measure $\mu$, then
$\mathscr{L}_{y}(q p^{2}) = \int q p^2 \mt{d} \mu$
for all $p,q$ with the degree $\deg(qp^2) \le 2k$.
This implies that
\begin{equation} 	\label{1.2}
M_{k}[y] \succeq 0, ~	L_{c_i}^{(k)}[y] = 0~ (i\in \mathcal{E}), ~
L_{c_j}^{(k)}[y]  \succeq 0~(j \in \mathcal{I}).
\end{equation}
The condition $(\ref{1.2})$ is necessary for $y$ to admit a
$K$-representing measure, but it  may not be sufficient.
In computational practice, a convenient condition
is the flat extension.
The tms $y\in \r^{\n^n_{2k}}$ is said to be flat if it satisfies the rank condition
\begin{equation}  	\label{1.3}
\operatorname{rank} M_{k-d_{K}}[y]
\, = \,  \operatorname{rank} M_{k}[y].
\end{equation}
The following is the classical result about flat extension
by Curto and Fialkow \cite{curto2005truncated}.

\begin{thm}[\cite{curto2005truncated}]  \label{thm1.1}
Let $K$ be as in $(\ref{1.1})$. If $y \in \mathbb{R}^{\mathbb{N}_{2 k}^{n}}$
satisfies \reff{1.2} and it is flat,
then $y$ admits a unique $K$-representing measure,
which is  $\operatorname{rank} M_{k}[y]$-atomic.
\end{thm}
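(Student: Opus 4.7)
The plan is to extend $y$ upward to an infinite moment sequence of the same rank, read off a finitely atomic representing measure from the resulting infinite moment matrix, and then verify via the localizing conditions that its support sits inside $K$.

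First I would establish the flat extension step. Since $d_K \geq 1$, the hypothesis $\operatorname{rank} M_{k-d_K}[y] = \operatorname{rank} M_k[y]$ entails $\operatorname{rank} M_{k-1}[y] = \operatorname{rank} M_k[y]$, so every column of $M_k[y]$ indexed by a monomial of degree $k$ is a forced linear combination of columns indexed by monomials of degree at most $k-1$. These column relations uniquely prescribe a rank-preserving extension to a matrix $M_{k+1}[\hat{y}]$, and a direct verification using the Hankel structure together with $M_k[y] \succeq 0$ shows $M_{k+1}[\hat{y}] \succeq 0$ with the same rank $r := \operatorname{rank} M_k[y]$. Iterating yields an infinite moment sequence $\tilde{y}$ whose full moment matrix $M_\infty[\tilde{y}]$ is positive semidefinite of rank $r$.

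Second, I would read off the measure from the kernel. Let $J \subseteq \mathbb{R}[x]$ consist of all polynomials whose coefficient vectors lie in $\ker M_\infty[\tilde{y}]$. The Hankel structure makes $J$ an ideal of codimension $r$, and positivity of $M_\infty[\tilde{y}]$ forces $J$ to be real radical, so its variety consists of exactly $r$ distinct real points $u_1,\dots,u_r$. The Riesz functional $\mathscr{L}_{\tilde{y}}$ factors through the $r$-dimensional quotient $\mathbb{R}[x]/J$ as evaluation at these points, producing positive weights $\lambda_1,\dots,\lambda_r$ with $\mathscr{L}_{\tilde{y}}(p) = \sum_{i=1}^r \lambda_i p(u_i)$ for every $p \in \mathbb{R}[x]$. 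Thus $\mu := \sum_{i=1}^r \lambda_i \delta_{u_i}$ is an $r$-atomic representing measure for $y$.

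Third, I would use the localizing conditions to confine the support to $K$, and then address uniqueness. The flat extension carries $L_{c_i}^{(k)}[y] = 0$ and $L_{c_j}^{(k)}[y] \succeq 0$ up to all orders, giving $\mathscr{L}_{\tilde{y}}(c_i p^2) = 0$ for $i \in \mathcal{E}$ and $\mathscr{L}_{\tilde{y}}(c_j p^2) \geq 0$ for $j \in \mathcal{I}$, for every $p \in \mathbb{R}[x]$. Substituting $\mu$ and choosing $p$ to be nonzero at a single prescribed atom (possible since the $u_\ell$ are distinct) yields $c_i(u_\ell) = 0$ and $c_j(u_\ell) \geq 0$, hence $u_\ell \in K$. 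For uniqueness, any $K$-representing measure $\nu$ of $y$ generates the same flat extension $\tilde{y}$ by the uniqueness in step one, hence the same kernel ideal $J$, hence the same atoms and weights.

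The main obstacle is the flat extension step itself: proving that the column relations forced in degree $k$ propagate consistently to all higher orders. The delicate point is that a new high-degree moment can be expressed in several ways by multiplying existing column relations by different monomials, and one must verify that all such expressions agree. This consistency is the technical core of the Curto--Fialkow argument and is where positive semidefiniteness really enters beyond the trivial column-spanning observation; everything after the extension is essentially linear algebra on the finite-dimensional quotient $\mathbb{R}[x]/J$ combined with a Lagrange interpolation at the distinct atoms.
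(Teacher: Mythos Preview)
The paper does not prove this theorem; it is cited from Curto--Fialkow \cite{curto2005truncated} without argument. Your outline follows the standard Curto--Fialkow strategy, and steps one and two are correct.

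Step three needs repair. The assertion that ``the flat extension carries $L_{c_i}^{(k)}[y]=0$ and $L_{c_j}^{(k)}[y]\succeq 0$ up to all orders'' is itself a nontrivial lemma in the Curto--Fialkow theory, not a consequence of the flat extension of $M_k$ alone; and you cannot deduce it from the existence of $\mu$, since for the extended sequence the localizing matrix equals $\sum_\ell\lambda_\ell\, c_j(u_\ell)\,[u_\ell][u_\ell]^T$, whose positive semidefiniteness is exactly the conclusion $c_j(u_\ell)\ge 0$ you want. The clean route bypasses any lifting and uses the \emph{full} flatness hypothesis $\operatorname{rank}M_{k-d_K}[y]=r$ (not merely gap~$1$): since $\mu$ represents $y$, the vectors $[u_\ell]_{k-d_K}$ are linearly independent, so there exist interpolants $p_\ell$ of degree at most $k-d_K$ with $p_\ell(u_i)=\delta_{\ell i}$; then $\deg(c_j p_\ell^2)\le 2d_K+2(k-d_K)=2k$, so $0\le\mathscr{L}_y(c_j p_\ell^2)=\lambda_\ell\, c_j(u_\ell)$ follows directly from the given $L_{c_j}^{(k)}[y]\succeq 0$. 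This is precisely where the gap $d_K$ in condition~\eqref{1.3} is actually used. Your uniqueness sketch has the same looseness: a second representing measure $\nu$ of $y$ need not a priori match $\tilde y$ beyond degree $2k$; instead, note that every $p\in\ker M_k[y]$ of degree $\le k$ satisfies $\int p^2\,\mathrm{d}\nu=\mathscr{L}_y(p^2)=0$, hence vanishes on $\operatorname{supp}(\nu)$, which together with $|\operatorname{supp}(\nu)|\ge r$ forces $\nu=\mu$.
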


For the case $y \in \mathbb{R}^{\a}$, $z \in \mathbb{R}^{\mathbb{N}_{2k}^{n}}$
and $2k \geq \deg(\a)$, the $z$ is said to be an extension of $y$ if
$y$ is the subvector of $z$ consisting of entries
labelled by $\af \in \mc{A}$. For such a case, we write that
$y=\left.z\right|_{ \a}$.  Indeed, it is shown in \cite{curto2005truncated}
that $y \in \mathbb{R}^{\mathbb{N}_{t}^{n}}$ admits a $K$-representing measure
if and only if it has a flat extension $z \in \mathbb{R}^{\mathbb{N}_{2 k}^{n}}$
that satisfies \reff{1.2} for some $2k\geq t$.

\subsection{Positive polynomials}

A set $I  \subseteq \mathbb{R}[x]$ is called an ideal
if $I \cdot \mathbb{R}[x] \subseteq \mathbb{R}[x]$ and $I+I \subseteq I$.
For a polynomial tuple $h=(h_1,\dots, h_t)$,
its real variety is the set
\[
V_{\r}(h)  \, \coloneqq \,
\{x\in \r^n \mid h_1(x)=\cdots=h_t(x)=0\}.
\]
The ideal generated by $h$ is denoted  by $\ideal{h}$, i.e.,
\begin{equation*}
\ideal{h}  \, \coloneqq \, h_1 \cdot \mathbb{R}[x]+\cdots+h_t \cdot \mathbb{R}[x].
\end{equation*}
For a degree $k$, the $k$th truncated ideal of $\ideal{h}$ is
\be  \label{ideal[h]k}
\ideal{h}_k  \, \coloneqq \, h_1 \cdot \mathbb{R}[x]_{k-\deg(h_1)}+\cdots+h_t \cdot
\mathbb{R}[x]_{k-\deg(h_t)}.
\ee

A polynomial is said to be SOS if $p=p_1^2+\dots+p_r^2$ for
$p_1,\dots,p_r \in \mathbb{R}[x]$.
Denote  the set of all SOS polynomials by $\Sigma[x]$.
For an even degree $k\in \n$, denote the truncation
\[
\Sigma[x]_{k} \, \coloneqq \, \Sigma[x] \cap  \mathbb{R}[x]_{k}.
\]
We refer to \cite{HillarNie08,Las01,PMI11,SOS12} for SOS polynomials and matrices.
For a polynomial tuple $g=(g_1,\dots,g_{\ell})$,
the quadratic module generated by $g$ is
\begin{equation}
\qmod{g}   \, \coloneqq \, \Sigma[x]+ g_1 \cdot \Sigma[x]+
\cdots+ g_{\ell} \cdot \Sigma[x].
\end{equation}
For an even degree $k$, its $k$th truncated quadratic module is defined as
\begin{equation}
\qmod{g}_k  \, \coloneqq \,
\Sigma[x]_{k}+ g_1 \cdot \Sigma[x]_{k-2\lceil \deg(g_1)/2\rceil}+\cdots+
g_{\ell} \cdot \Sigma[x]_{k-2\lceil \deg(g_\ell)/2\rceil}.
\end{equation}

The sum $\ideal{h}+\qmod{g} $ is said to be archimedean
if there exists $N >0$ such that $N-\|x\|^2 \in \ideal{h}+\qmod{g} $. Let
\[
S=\{x\in \rn:h(x)=0,~g(x)\geq 0\}.
\]
Clearly, if $\ideal{h}+\qmod{g}$ is archimedean,
then $S$ is a compact set.
Note that if $p \in \ideal{h}+\qmod{g}$, then $p \ge 0$ on $S$.
However, the inverse may not be true.
The following conclusion is usually referenced as Putinar's Positivstellensatz.

\begin{thm}[\cite{putinar1993positive}]  	\label{thm2.2}
Suppose $\ideal{h}+\qmod{g}$ is  archimedean.
If $p$ is positive on  $S$,
then $p \in  \ideal{h}+\qmod{g}$.
\end{thm}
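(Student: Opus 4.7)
The plan is to argue by contradiction, combining a Hahn--Banach separation of $p$ from the cone $M := \ideal{h}+\qmod{g}$ with Haviland's representation theorem. Suppose $p>0$ on the compact set $S$ but $p \notin M$; the goal is to produce a Borel measure $\mu$ on $S$ for which $\int p\,d\mu$ is simultaneously positive (since $p>0$ on $S$) and nonpositive (from the separating functional), a contradiction.

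First I would construct a separating linear functional. Since $M$ is a convex cone in $\re[x]$ and each truncation $M \cap \re[x]_d$ can be shown to be closed in the finite-dimensional space $\re[x]_d$ using the archimedean property, finite-dimensional hyperplane separation yields a nonzero functional $L \colon \re[x] \to \re$ with $L(q)\geq 0$ on $M$ and $L(p)\leq 0$. The archimedean element $N-\|x\|^2 \in M$ forces $L(1) > 0$, so we may normalize $L(1)=1$.

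Next I would extract analytic bounds on $L$. For each multi-index $\alpha$, iterated application of $N-\|x\|^2 \in M$ produces elements of $M$ that dominate $x^{2\alpha}$, yielding estimates of the form $L(x^{2\alpha}) \leq N^{|\alpha|}$. Combined with the Cauchy--Schwarz inequality $L(q)^2 \leq L(1)L(q^2)$, this shows $L$ is continuous in the supremum norm on some compactum containing $S$. Stone--Weierstrass then lets $L$ extend continuously and positively to $C(S)$, and the Riesz representation theorem produces a finite Borel measure $\mu$ with $L(q)=\int q\,d\mu$ for all $q \in \re[x]$. The identities $L(h_i q)=0$ and $L(g_j q^2)\geq 0$ force $\supp{\mu} \subseteq S$; since $L(1)=1$ makes $\mu$ nonzero and $p>0$ on the compact set $S$, we obtain $L(p)=\int_S p\,d\mu>0$, contradicting $L(p)\leq 0$.

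The main obstacle is extracting, from the purely algebraic archimedean condition, the analytic bounds needed to apply Riesz representation: one must verify both that each truncation $M \cap \re[x]_d$ is closed (so that separation actually applies) and that the resulting $L$ is bounded in the sup-norm on $S$. Both steps rely essentially on the existence of $N-\|x\|^2 \in M$, which is precisely the hypothesis; the support containment and the final sign contradiction are routine once $\mu$ has been produced.
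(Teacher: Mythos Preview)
The paper does not prove Theorem~\ref{thm2.2}; it is quoted as Putinar's Positivstellensatz with a citation to \cite{putinar1993positive} and used as a black box throughout. So there is no ``paper's own proof'' to compare against.

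Your outline is the standard functional-analytic route to Putinar's theorem and is essentially sound, but two steps are imprecise. First, the assertion that each truncation $M\cap\re[x]_d$ is closed is neither obvious nor needed, and truncating creates a real problem: a separating functional $L_d$ on $\re[x]_d$ need not extend to one that is nonnegative on all of $M$. The clean fix is to exploit that the archimedean condition makes $1$ an order unit (for every $a\in\re[x]$ there is $\lambda$ with $\lambda\pm a\in M$), so one can apply an algebraic Hahn--Banach/Eidelheit separation directly in $\re[x]$, or equivalently invoke the Kadison--Dubois/Jacobi representation for archimedean quadratic modules; either yields a linear $L$ on all of $\re[x]$ with $L\ge 0$ on $M$, $L(1)>0$, and $L(p)\le 0$. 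Second, the passage from the moment bounds $L(x^{2\alpha})\le N^{|\alpha|}$ to a measure with support in $S$ is correct in spirit but needs the multivariate Hamburger solvability (Carleman/determinacy or Haviland) rather than just Stone--Weierstrass plus Riesz; once $L$ is realized by a measure $\mu$, the identities $L(h_i q)=0$ and $L(g_j\sigma)\ge 0$ do force $\supp\mu\subseteq S$, and the contradiction with $L(p)\le 0$ follows as you wrote.
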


We refer to \cite{Las01,LasBk15,laurent2009sums,nieopcd}
for more introductions to moment and polynomial optimization.

\section{Geometric properties of $\rk$ and $\pk$}
\label{sc:geo}

This section gives geometric properties of
the moment cone $\rk$ and the nonnegative polynomial cone $\pk$.
These properties include relative interiors, closedness and duality.
Moreover, we give convergent Moment-SOS relaxations for $\rk$ and $\pk$.
The set $K$ is not assumed to be compact.

%
%


The followings are basic properties of the cones $\pk$ and $\rk$.
Recall that $I(K)_{\a}$ denotes  the vanishing ideal
 as in \reff{def:I(K)A}.

\begin{thm} \label{thm3.1}
Let $\a \subseteq \n^n$ be a finite power set,
$d= \deg(\A)$
and let $K$ be the semialgebraic set as in \reff{1.1}.
Then, we have
\bit
	
\item [(i)] It holds the duality relation
\be \label{dual:PARA(K)}
\mathscr{R}_{\a}(K)^*  = \mathscr{P}_{\a}(K), \quad
\mathscr{P}_{\a}(K)^*  =  cl(\mathscr{R}_{\a}(K)).
\ee
If $K$ is compact and
there exists $q \in \rx_{\a}$ such that $q >0$ on $K$, then
\[
int(P_{\a}(K))=\{p\in \r[x]_{\a}:p\left|_{ K}\right. >0\},
\]
and the moment cone $\mathscr{R}_{\a}(K)$ is closed.

\item [(ii)]
The subspace spanned by the cone $\mathscr{R}_{\a}(K)$ is
\be \label{aff:RA(K)}
\mbox{Span} \, \mathscr{R}_{\a}(K)  \, = \,
\{y \in \r^{\a} \mid \langle p, y \rangle = 0,
\,\, \forall \,  p \in I(K)_{\a} \},
\ee
or equivalently, $\mbox{Span} \, \mathscr{R}_{\a}(K)  = I(K)_{\a}^\perp$.
This implies that
\[
\dim \mathscr{R}_{\a}(K) \, = \,
|\A|-\mathrm{dim}\, I(K)_{\a} .
\]
Furthermore, the relative interior of
$\mathscr{R}_{\a}(K)$ is given as
\be \label{ri:RA(K)}
ri\left(\mathscr{R}_{\a}(K)\right)  =
\{y \in \r^{\a} \mid \mathscr{L}_{y} ~
\text{is strictly}~ K\text{-positive} \}.
\ee
In particular, if $I(K)_{\a} = \{ 0 \}$, the moment cone
$\mathscr{R}_{\a}(K)$ has nonempty interior.


\eit
\end{thm}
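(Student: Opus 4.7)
The plan is to handle parts (i) and (ii) separately, combining cone/polar duality, Theorem \ref{thm2.1} for the compactness-based statements, and standard convex-analytic facts about relative interiors of convex cones.

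For part (i), I would verify the two duality identities first. The inclusion $\pk \subseteq \rk^*$ is the integration inequality $\int p \, \mt{d} \mu \ge 0$ for $p \in \pk$ and $\mu$ a $K$-representing measure of $y$; the reverse inclusion tests any $p \in \rk^*$ against the Dirac moment sequences $[x]_{\A}$, $x \in K$, which are themselves in $\rk$ and produce the pointwise inequality $p(x) \ge 0$. The identity $\pk^* = cl(\rk)$ is then the bipolar theorem. Under the added compactness and positivity hypothesis, the interior description splits into two directions: if $p|_K > 0$, compactness gives a uniform lower bound $p \ge \delta > 0$ on $K$ that absorbs small coefficient perturbations, so $p \in int(\pk)$; conversely if $p \in int(\pk)$, then $p - \epsilon q \in \pk$ for small $\epsilon > 0$, and a zero of $p$ on $K$ would force the contradiction $-\epsilon q(x_0) \ge 0$. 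Closedness of $\rk$ comes directly from Theorem \ref{thm2.1}: any limit $y \in cl(\rk)$ has a $K$-positive functional $\mathscr{L}_y$ by linearity of the pairing, and the theorem applied to $H = \rx_{\A}$ (whose positivity assumption is exactly the hypothesized existence of $q$) furnishes a finitely atomic $K$-representing measure.

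For part (ii), I would first establish $\mathrm{Span}\,\rk = I(K)_{\A}^\perp$. The inclusion $\subseteq$ is immediate by integration: $\langle p, y \rangle = \int p \, \mt{d} \mu = 0$ when $p \in I(K)_{\A}$. For $\supseteq$, note that $\rk$ contains every Dirac moment vector $[x]_{\A}$ with $x \in K$; any linear form on $\r^{\A}$ vanishing on this set corresponds via the monomial basis to a polynomial in $I(K)_{\A}$, so by linear-algebraic duality $\mathrm{Span}\,\{[x]_{\A} : x \in K\} = I(K)_{\A}^\perp$. The containment $\mathrm{Span}\,\rk \subseteq \mathrm{Span}\,\{[x]_{\A} : x \in K\}$ follows by Riemann-sum approximation of the defining integral $y = \int [x]_{\A} \, \mt{d} \mu$. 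The dimension formula, and the corollary that $\rk$ has non-empty interior when $I(K)_{\A} = \{0\}$, are immediate. For the relative interior identity, I would invoke the standard characterization that for a convex cone $C$ with span $V$, a point $y \in ri(C)$ iff $y \in C$ and every $p \in C^*$ satisfying $\langle p, y \rangle = 0$ already lies in $V^\perp$. Applied with $C = cl(\rk)$, $C^* = \pk$, and $V^\perp = I(K)_{\A}$, this says precisely that $\mathscr{L}_y(p) = 0$ with $p \in \pk$ forces $p \in I(K)_{\A}$, i.e., $\mathscr{L}_y$ is strictly $K$-positive. The classical identity $ri(C) = ri(cl(C))$ for any convex set $C$ then transfers the conclusion from $cl(\rk)$ back to $\rk$ itself.

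The main obstacle will be the last step of the relative-interior argument, where strict $K$-positivity of $\mathscr{L}_y$ places $y$ only in $cl(\rk)$ a priori; promoting this to $y \in \rk$ depends on the identity $ri(C) = ri(cl(C))$. A secondary care point is the proper invocation of Theorem \ref{thm2.1} in the closedness proof, since the hypothesized positive polynomial $q \in \rx_{\A}$ is exactly what permits $H = \rx_{\A}$ to satisfy the theorem's positivity assumption.
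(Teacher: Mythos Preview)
Your proposal is correct. Part (i) and the span identity in (ii) match the paper's proof essentially step for step: both use Dirac moment vectors $[u]_{\A}$ for the pointwise direction of duality, the bipolar theorem for $\pk^* = cl(\rk)$, the $p - \epsilon q$ trick for the interior description, and Theorem~\ref{thm2.1} for closedness.

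Where you genuinely diverge is the relative-interior characterization \eqref{ri:RA(K)}. The paper argues constructively: it fixes a measure with full support $K$, lets $z$ be its $\A$-moment vector so that $\mathscr{L}_z$ is strictly $K$-positive, and then (a) for $y \in ri(\rk)$ uses $y - \epsilon z \in \rk$ to transfer strict positivity from $z$ to $y$, and (b) for strictly $K$-positive $\mathscr{L}_y$ passes to the quotient $\rx_{\A}/I(K)_{\A}$ and runs a compactness argument on the quotient-norm sphere to show $y + \epsilon w \in cl(\rk)$ for all small $w \in \mbox{Span}\,\rk$. You instead invoke the abstract face characterization of $ri(C)$ for a closed convex cone $C$ (namely $y \in ri(C)$ iff $y \in C$ and $\{p \in C^*: \langle p, y\rangle = 0\} \subseteq (\mbox{Span}\,C)^\perp$), applied to $C = cl(\rk)$ with $C^* = \pk$ and $(\mbox{Span}\,C)^\perp = I(K)_{\A}$; this is exactly the statement that $\mathscr{L}_y$ is strictly $K$-positive. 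Both routes finish with $ri(\rk) = ri(cl(\rk))$. Your argument is shorter and more conceptual, avoiding the auxiliary full-support measure and the quotient-space compactness estimate; the paper's argument is more self-contained and does not appeal to the face/relative-interior duality as a black box.
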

\begin{proof}
	
(i) By the definition, one can directly check that
$\mathscr{R}_{\a}(K)^*  =  \mathscr{P}_{\a}(K).$
Note that $\mathscr{R}_{\a}(K)$ is a convex cone.
Clearly, its closure is a closed convex cone.
By the bi-duality theorem (see \cite{Bert}), it holds that
\[
cl(\mathscr{R}_{\a}(K))
=  \big(  cl(\mathscr{R}_{\a}(K) )^* \big)^*
=  ( \mathscr{R}_{\a}(K)^* )^*
=   \mathscr{P}_{\a}(K)^*.
\]
Therefore, the duality relation \reff{dual:PARA(K)} holds.

When $K$ is compact and there exists $q \in \rx_{\a}$ such that $q >0$ on $K$,
the closedness of $\rk$ follows from Theorem \ref{thm2.1}.
For each $p>0$ on $K$, we have $p+h>0$ on $K$ for every
$h \in \mathbb{R}[x]_{\mathcal{A}}$ with sufficiently small coefficients.
This means that $p \in int\left(\mathscr{P}_{\mathcal{A}}(K)\right)$.
Conversely, for each $p \in \operatorname{int}\left(\mathscr{P}_{\mathcal{A}}(K)\right)$,
we have $p-\epsilon q \in \mathscr{P}_{\mathcal{A}}(K)$ for some $\epsilon>0$.
Then $p \geq \epsilon q>0$ on $K$.

(ii) First, we prove \reff{aff:RA(K)}.
Note that $\mbox{Span} \, \mathscr{R}_{\a}(K) $ is the subspace
spanned by all vectors $[u]_{\a}$, with $u \in K$.
For every $u \in K$ and for every $p \in I(K)_{\a}$, it is clear that
$\langle [u]_{ \a},p\rangle =0$, so
\[
\mbox{Span} \, \mathscr{R}_{\a}(K)  \, \subseteq \,
\{y \in \r^{\a} \mid \langle p, y \rangle = 0,
\,\, \forall \,  p \in I(K)_{\a} \} .
\]
We prove the reverse containment ``$\supseteq$" also holds.
For each $\xi \in \r^{\a}$ such that
$\xi \not\in \mbox{Span} \, \mathscr{R}_{\a}(K)$,
there must exist a polynomial $q \in \re[x]_{\a}$ such that
\[
\langle q, \xi \rangle = 1, \quad
\langle q, [u]_{\a} \rangle = 0, \,\, \forall \, \, u \in K.
\]
This implies that
\[
\mbox{Span} \, \mathscr{R}_{\a}(K)  \, \supseteq \,
\{y \in \r^{\a} \mid \langle p, y \rangle = 0,
\,\, \forall \,  p \in I(K)_{\a} \} .
\]
So, the equality \reff{aff:RA(K)} holds and hence
\[
\mathrm{dim}\, \mathscr{R}_{\a}(K)  \, = \,
\left| \a \right|-\mathrm{dim}\, I(K)_{\a}.
\]

Second, we prove \reff{ri:RA(K)}.
Since $K$ is closed, there exists a positive Borel measure $\mu$
whose support is exactly equal to $K$.
Let $z$ be the $\a$-tms such that
$z_{\alpha}=\int_K x^{\alpha} \mathrm{d}\mu $
for every $ \alpha \in \a $.
Clearly, $\l_z$ is strictly $K$-positive and $z \in \mbox{Span} \, \mathscr{R}_{\a}(K)$.
The equality \reff{ri:RA(K)} is implied by the following two facts.

1) For each $y \in ri\left(\mathscr{R}_{\a}(K)\right)$,
there exists $\epsilon >0$ such that $y-\epsilon z \in \mathscr{R}_{\a}(K)$, so
\[
\mathscr{L}_{y}(p) =  \mathscr{L}_{y - \eps z}(p)
+ \eps \mathscr{L}_{z}(p) \ge  \eps \mathscr{L}_{z}(p) > 0
\]
for every $p \in \pk$, $p\left|_{ K}\right. \not\equiv 0$.
This means that $\mathscr{L}_{y}$ is strictly $K$-positive.

2) Consider $y\in \r^{\a}$ such that $\mathscr{L}_{y}$ is strictly $K$-positive.
Let  $P:=\mathbb{R}[x]_{\a} / I(K)_{\a}$ be the quotient space.
The equivalent class of $p \in \re[x]_{\a}$ in $P$ is denoted as $[p]$.
Its quotient norm is
\[
\|[p]\|_P  \, = \, \inf_{a \in I(K)_{\a} } \|p - a\|_2 .
\]
Define the Riesz functional $\tilde{\mathscr{L}_y}$ on $P$ such that
\[
\tilde{\mathscr{L}_y}([p]) \, \coloneqq \,  \mathscr{L}_{y}(p) .
\]
For $q \in I(K)_{\a}$, we have
$\mathscr{L}_{y}(q)\geq 0$ and $\mathscr{L}_{y}(-q)\geq 0$,
so $\mathscr{L}_{y}(q)=0$.
This means that $\tilde{\mathscr{L}_y}$ is well-defined.
Denote the set
\[
T  \coloneqq  \{ [p] \in \pk / I(K)_{\a}:  \|[p]\|_P=1 \}.
\]
Clearly, $T$ is a compact subset of $P$.
Denote the constants
\[
\baray{rcl}
\delta_1  & \coloneqq  &
\min \Big\{\tilde{\mathscr{L}}_{y}([p]):[p]\in T \Big \},  \\
\delta_2  & \coloneqq &
\max \left\{ \tilde{\mathscr{L}}_{w}([p])
\left| \baray{l}
w \in \mbox{Span} \, \mathscr{R}_{\a}(K), \\
\| w \| = 1, \, [p]\in T
\earay \right.
\right\}.
\earay
\]
Note that $\mathscr{L}_{y}$ is  strictly $K$-positive
if and only if $\delta_1 > 0$.
For every $w \in \mbox{Span} \, \mathscr{R}_{\a}(K)$ with $\| w \| = 1$
and for every $0 \le \eps \le \frac{\delta_1}{2\delta_2}$, it holds that
\[
\tilde{\mathscr{L}}_{y + \eps w} ([p]) =
\tilde{\mathscr{L}_{y } }([p])  + \eps \tilde{\mathscr{L}_{w} }([p])
\ge \delta_1-\frac{\delta_1}{2} > 0
\]
for all $[p] \in T$. Thus, $\mathscr{L}_{y + \eps w}$ is $K$-positive and
$y + \eps w \in  cl(\mathscr{R}_{\a}(K))$.
Note that $ri(cl(\rk))=ri(\rk)$.
This means that $ y \in ri\left(\mathscr{R}_{\a}(K)\right).$

Last, if $I(K)_{\a} = \{ 0 \}$, then $\mathrm{dim}\, \mathscr{R}_{\a}(K)=\left| \a \right|$,
i.e., $\mathscr{R}_{\a}(K)$ is full-dimensional
and it must have nonempty interior.
\end{proof}

As an exposition, consider the PSOP tensor cone $\mathrm{PS}^{n+1}_m$
(see Definition~\ref{def:PSOP}). Observe that
$\mathrm{PS}^{n+1}_m = \mathscr{R}_{m}(K)$
with $K = \re^n$.
Since $I(\r^n)_{\n^n_m}= \{0\}$, we know the cone $\mathrm{PS}^{n+1}_m$
is full-dimensional and has nonempty interior,
by Theorem~\ref{thm3.1}.
When $K$ is unbounded, the moment cone $\mathscr{R}_{\a}(K)$
is generally not closed, while $\mathscr{P}_{\a}(K)$ is always closed.
For instance, when $K=\r$ and $\a=\n^1_4$, for  $y=\left(1,1,1,1,2\right)$,
the Riesz functional $\l_y$ is $K$-positive but $y$ admits no $K$-representing measures (see \cite{Ra2008An}).
In \cite{easwaran2011positive}, Easwaran and Fialkow exhibited
a family of positive linear functionals
that admit no representing measures.
However, if $K$ is compact and there exists $q \in \rx_{\a}$ such that $q >0$ on $K$,
the cone $\mathscr{R}_{\a}(K)$ is closed
(see item (i) of Theorem \ref{thm3.1}).
When $K$ is compact, convergent semidefinite relaxations for $\pk$ and $\rk$
are given in \cite{Nie2015Linear}.
When $K$ is unbounded, these relaxations typically do not converge.
In the following, we use the trick of homogenization
to construct convergent semidefinite relaxations for $\pk$ and $\rk$
when $K$ is unbounded.

\subsection{Homogenization}

Let $K$ be the set as in \reff{1.1} and let $\tilde{x} \, \coloneqq \,  (x_0,x)$.
For a polynomial $p$, recall that  $\tilde{p} = x_0^{\deg(p)} p(x/x_0)$
denotes the homogenization of $p$. Define the sets
\be \label{nn}
\boxed{
\baray{rcl}
\widetilde{K}^h & \coloneqq &
\left\{  \tilde{x}\in \r^{n+1}
\left| \baray{l}
\tilde{c}_{i}(\tilde{x})=0~(i \in \mathcal{E}), \\
\tilde{c}_{j}(\tilde{x}) \geq 0~(j \in \mathcal{I}), \,
x_{0} \geq 0
\earay \right. \right\}, \\
\widetilde{K}^c & \coloneqq &  \widetilde{K}^h \cap
      \{  \tilde{x}\in \r^{n+1} :  x_0 > 0  \}, \\
\widetilde{K}   & \coloneqq &   \widetilde{K}^h \cap
      \{ \tilde{x}\in \r^{n+1} : x_0^2 + x^Tx = 1 \}
\earay
}.
\ee

\begin{dfn}[\cite{nie2012discriminants}] \label{closed:inf}
The set $K$ is said to be closed at $\infty$
if $cl(\widetilde{K}^c) = \widetilde{K}^h$.
\end{dfn}

We remark that the above definition of closedness at $\infty$
depends on the describing polynomials for $K$.
Throughout the paper, when the closedness at $\infty$
is mentioned, the describing polynomials
as in \reff{1.1} are clear in the context.
It is interesting to note that the closedness at $\infty$
is a genericity property (see \cite{guo2014minimizing}).
The following conclusion is obvious.

\begin{lemma}[\cite{nie2012discriminants}]\label{lem3.3}
A polynomial $p > 0$ (resp., $p \geq 0$) on $K$ if and only if
$\tilde{p} > 0$ (resp., $\tilde{p} \geq  0$) on $\widetilde{K}^c$.
Moreover, if $K$ is closed at $\infty$, then $p \geq  0$ on $K$
if and only if $\tilde{p} \geq 0$ on $\widetilde{K}$.	
\end{lemma}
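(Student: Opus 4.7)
The plan is to reduce both parts to the observation that $\widetilde{K}^h$, $\widetilde{K}^c$, and the polynomials $\tilde{c}_i$, $\tilde{p}$ are all homogeneous in $\tilde{x}=(x_0,x)$, so every point with $x_0>0$ can be rescaled either to have $x_0=1$ (recovering a point of $K$) or to lie on the unit sphere (recovering a point of $\widetilde{K}$).

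For the first statement, I would establish a bijection between $K$ and the set $\widetilde{K}^c \cap \{x_0 = 1\}$ via $u \leftrightarrow (1,u)$, and then push this to a correspondence between $K$ and all of $\widetilde{K}^c$ via the rescaling $(x_0,x) \mapsto x/x_0$ with $x_0>0$. Concretely, from $\tilde{c}_i(x_0,x) = x_0^{\deg c_i} c_i(x/x_0)$ and $x_0>0$, the point $(x_0,x)\in\widetilde{K}^c$ iff $x/x_0\in K$; and from $\tilde{p}(x_0,x)=x_0^{\deg p}p(x/x_0)$, the sign of $\tilde{p}$ on $\widetilde{K}^c$ equals the sign of $p$ on $K$. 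Both the strict and non-strict equivalences then follow immediately in both directions.

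For the second statement, the ``only if'' direction uses the closedness at infinity in an essential way. Given $p\ge 0$ on $K$, the first part yields $\tilde{p}\ge 0$ on $\widetilde{K}^c$. Since $\widetilde{K}\subseteq \widetilde{K}^h = cl(\widetilde{K}^c)$ by Definition~\ref{closed:inf}, every $\tilde{x}\in\widetilde{K}$ is a limit of points in $\widetilde{K}^c$; continuity of $\tilde{p}$ then gives $\tilde{p}(\tilde{x})\ge 0$. For the ``if'' direction, closedness at infinity is not needed: given $u\in K$, the point $(1,u)$ lies in $\widetilde{K}^h$, and because $\widetilde{K}^h$ is a cone (its defining polynomials are homogeneous and $x_0\ge 0$ is homogeneous), the rescaled point $\tilde{x}=(1,u)/\sqrt{1+\|u\|^2}$ belongs to $\widetilde{K}$; by homogeneity of $\tilde{p}$, $\tilde{p}(\tilde{x}) = (1+\|u\|^2)^{-\deg(p)/2}p(u)$, so $\tilde{p}(\tilde{x})\ge 0$ forces $p(u)\ge 0$.

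I expect no serious obstacle: the lemma is essentially a bookkeeping exercise in homogenization. The only subtle point is why the non-strict statement on $\widetilde{K}$ requires the hypothesis that $K$ be closed at infinity, which is exactly where one must rule out the possibility that $\tilde{p}$ is nonnegative on $\widetilde{K}^c$ but negative at some ``point at infinity'' in $\widetilde{K}^h\setminus cl(\widetilde{K}^c)$. This is why the strict/non-strict version on $\widetilde{K}^c$ in the first statement needs no extra hypothesis, while the statement on $\widetilde{K}$ does.
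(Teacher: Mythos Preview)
Your proposal is correct and complete. The paper itself does not give a proof of this lemma: it is stated as a citation from \cite{nie2012discriminants} with the remark that the conclusion is obvious, so there is no argument in the paper to compare against. Your rescaling argument is exactly the standard one and handles both the strict and non-strict cases, as well as the role of closedness at $\infty$, correctly.
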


Recall that $p^\hm$ denotes the homogeneous part of the highest degree terms of $p$.
Denote
\be \label{set:Ke}
K^e  \coloneqq  \left\{  x \in \mathbb{R}^{n}
\left| \baray{l}
c^\hm_{i}(x)=0~(i \in \mathcal{E}),  \\
c^\hm_{j}(x) \geq 0~(j \in \mathcal{I}), \\
\|x\|^2-1=0
\earay \right. \right\}.
\ee

When $K$ is compact, the interior of $\mathscr{P}_d(K)$
is precisely the set of polynomials in $\re[x]_d$ that are positive on $K$.
When $K$ is unbounded,  the interior of $\mathscr{P}_d(K)$ is more tricky. 
The interiors of $\mathscr{P}_{d}(K)$ and $\mathscr{P}_{\a}(K)$
can be characterized as follows.

\begin{theorem}\label{pro3.7}
Let $K$ be as in \reff{1.1}.
Suppose $K$ is closed at $\infty$.
Then,  we have that
\bit

\item[(i)] For a degree $d>0$, the interior of
$\mathscr{P}_d(K)$ can be given as
\be
int(\mathscr{P}_d(K)) \, =  \,
\left\{p\in \r[x]_d
\left|\baray{l}
p\left|_{ K}\right. >0,\, 
p^{(d)}|_ {K^{e}} >0
\earay \right.
\right \},
\ee
where $p^{(d)}$ denotes homogeneous part of  degree-d terms of $p$. 

\item[(ii)]
For a finite set $\A \subseteq \N^n$, let $d=\deg(\A)$.
If $\r[x]_{\a} \cap int(\mathscr{P}_{d}(K)) \ne \emptyset$, then
\be
int(\mathscr{P}_{\a}(K))  =
 \left\{ p\in \r[x]_{\a}
\left|\baray{l}
p\left|_{ K}\right. >0,\,
p^{(d)}|_ {K^{e}} >0
\earay \right.
\right \}.
\ee

\eit
\end{theorem}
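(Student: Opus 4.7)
The plan is to lift the problem to the compact set $\widetilde K$ via the degree-$d$ homogenization
\[
\hat p(x_0,x) \,\coloneqq\, x_0^d\, p(x/x_0)
\,=\, \sum_{\alpha} p_\alpha\, x_0^{d-|\alpha|} x^\alpha,\qquad p \in \rx_d,
\]
and to translate every statement about $p$ on $K$ into one about $\hat p$ on $\widetilde K$. The key equivalence is that, because $K$ is closed at infinity, $p \ge 0$ on $K$ if and only if $\hat p \ge 0$ on $\widetilde K$. Combined with the identification $\widetilde K \cap \{x_0=0\} = \{0\}\times K^e$ and the identity $\hat p(0,u) = p^{(d)}(u)$, this already forces every $p \in \mathscr{P}_d(K)$ to satisfy $p^{(d)} \ge 0$ on $K^e$.

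For part (i), the ``$\supseteq$'' direction uses that if $p|_K>0$ and $p^{(d)}|_{K^e}>0$, then $\hat p > 0$ on the compact set $\widetilde K$, so $\hat p \ge \delta$ for some $\delta>0$; for any $q \in \rx_d$ with sufficiently small coefficient norm one has $\widehat{p+q} = \hat p + \hat q > 0$ on $\widetilde K$, and evaluating at $(1,y)/\|(1,y)\| \in \widetilde K$ gives $(p+q)(y)>0$ for every $y \in K$. For ``$\subseteq$'', a constant perturbation $q = -\epsilon$ forces $p \ge \epsilon$ on $K$. For the other positivity condition, we already know $p^{(d)} \ge 0$ on $K^e$; if $p^{(d)}(u)=0$ at some $u \in K^e$, then the perturbation $q := -\epsilon_0 (u^T x)^d \in \rx_d$ yields $\widehat{p+q}(0,u) = -\epsilon_0 < 0$, contradicting $p+q \in \mathscr{P}_d(K)$ for small $\epsilon_0>0$.

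Part (ii) follows formally from (i) together with the nonempty-intersection hypothesis. Fix a witness $q_0 \in \rx_{\mathcal{A}} \cap \mathrm{int}(\mathscr{P}_d(K))$, so by (i) we have $q_0|_K > 0$ and $q_0^{(d)}|_{K^e} > 0$. The ``$\supseteq$'' direction is immediate: any $p \in \rx_{\mathcal{A}}$ with both strict positivities lies in $\mathrm{int}(\mathscr{P}_d(K))$ by (i), and the trace of a $\rx_d$-ball onto $\rx_{\mathcal{A}}$ certifies interiority in $\mathscr{P}_{\mathcal{A}}(K) = \mathscr{P}_d(K)\cap\rx_{\mathcal{A}}$. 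For ``$\subseteq$'', interiority of $p$ inside $\rx_{\mathcal{A}}$ gives $p - \epsilon q_0 \in \mathscr{P}_{\mathcal{A}}(K) \subseteq \mathscr{P}_d(K)$ for some small $\epsilon>0$; applying the observation in the first paragraph to $p - \epsilon q_0$ delivers both $p|_K \ge \epsilon\, q_0|_K > 0$ and $p^{(d)}|_{K^e} \ge \epsilon\, q_0^{(d)}|_{K^e} > 0$.

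The main obstacle will be establishing cleanly the fixed-degree equivalence $p \ge 0 \text{ on } K \Leftrightarrow \hat p \ge 0 \text{ on } \widetilde K$ for every $p \in \rx_d$, because Lemma \ref{lem3.3} is stated for the classical homogenization $\tilde p$ of degree $\deg(p)$, which differs from $\hat p$ precisely when $\deg(p)<d$. The resolution is to verify directly that $(x_0,x) \in \widetilde K$ with $x_0 > 0$ implies $x/x_0 \in K$ (so $\hat p \ge 0$ on $\widetilde K^c \cap S^n$) and then to extend by continuity using $\mathrm{cl}(\widetilde K^c \cap S^n) = \widetilde K$; the latter follows from closedness at infinity ($\mathrm{cl}(\widetilde K^c) = \widetilde K^h$) together with the conic structure of $\widetilde K^h$, via rescaling a dense sequence in $\widetilde K^c$ onto the unit sphere.
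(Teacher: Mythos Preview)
Your proof is correct and takes a genuinely different route from the paper. The paper argues the inclusion $\supseteq$ in (i) by contradiction: assuming $p\notin int(\mathscr{P}_d(K))$, it picks a supporting hyperplane with normal $q$, obtains a sequence $x^{(k)}\in K$ with $p(x^{(k)})-\tfrac1k q(x^{(k)})<0$, shows this sequence must be unbounded, normalizes it, and passes to a limit in $K^e$ to contradict $p^{(d)}|_{K^e}>0$. For $\subseteq$ it perturbs by single monomials $\pm\epsilon\, x_i^d$ and invokes Lemma~\ref{lem3.3}. You instead lift once and for all to the compact set $\widetilde K$ via the degree-$d$ homogenization $\hat p$, establish the fixed-degree equivalence $p\ge 0$ on $K\iff \hat p\ge 0$ on $\widetilde K$ (which is \emph{not} Lemma~\ref{lem3.3}, since that lemma uses $\tilde p$ of degree $\deg(p)$; you correctly supply the missing density step $cl(\widetilde K^c\cap S^n)=\widetilde K$ from closedness at $\infty$ and the conic structure of $\widetilde K^h$), and then both directions of (i) become soft: $\supseteq$ is compactness plus linearity of $p\mapsto\hat p$, and $\subseteq$ uses the single perturbation $-\epsilon_0(u^Tx)^d$ together with $\hat p(0,u)=p^{(d)}(u)$. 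Your treatment of (ii) is also cleaner: because you work with $p^{(d)}$ rather than $p^{\hm}$, the identity $(p-\epsilon q_0)^{(d)}=p^{(d)}-\epsilon q_0^{(d)}$ holds without any case analysis on $\deg(p)$, whereas the paper must separately argue that $\deg(p)=d$ via another unbounded-sequence argument. In effect you are proving Proposition~\ref{lem3.9} on the way to Theorem~\ref{pro3.7}, which buys a more uniform argument at the cost of the one extra density lemma; the paper's route avoids that lemma but pays with two sequence-extraction arguments.
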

\begin{proof}
	When $K$ is bounded, the conclusions follow directly from item (i) of Theorem \ref{thm3.1} and the fact that $K^e=\emptyset$. We only consider that $K$ is unbounded in the following.
	Note that for the unbounded $K$, the positivity conditions  $p\left|_{ K}\right. >0,\, 
	p^{(d)}|_ {K^{e}}>0$ are equivalent to that $p\left|_{ K}\right. >0$, $\deg(p)=d$, $p^\hm|_ {K^{e}}>0$.

(i) First, consider an arbitrary $p\in int(\mathscr{P}_d(K))$.
Then  $p - \eps \cdot 1 \in  \mathscr{P}_d(K)$ for some $\eps > 0$,
so $p\left|_{ K}\right. >0$.
For every power $\af$ with $|\af| = d$,
we  have $p \pm \eps x^\af \in  \mathscr{P}_d(K)$ for some $\eps > 0$,
which is impossible if $\deg(p) < d$. Hence, $\deg(p) = d$.
Next, we show that $ p^\hm |_ {K^{e}} >0$.
For every point $u \in K^e$, since $\|u\| = 1$,
we must have $u_i \neq 0$ for some $i=1,\dots,n$.
There exists $\epsilon>0$ such that
$p - \epsilon \cdot sign(u_i)^dx_i^d \in \mathscr{P}_d(K)$.
By Lemma~\ref{lem3.3}, we can get
\[
(p -\epsilon \cdot sign(u_i)^d x_i^d)^\hm =
p^\hm-\epsilon \cdot sign(u_i)^d x_i^d \geq 0
\quad \mbox{on} \quad K^e.
\]
Hence, $ p^\hm(u) \geq \epsilon \cdot |u_i|^d > 0$.
Since $u$ is arbitrary in $K^e$, we have $ p^\hm\left|_ {K^{e}} >0 \right.$.

Second, consider $p \in \re[x]_d$ such that
$p\left|_{ K}\right. >0$,  $p^{(d)}\left|_ {K^{e}} >0 \right.$.
Note that $p \in \mathscr{P}_d(K)$.
Suppose otherwise that $p \notin int(\mathscr{P}_d(K))$.
Then there exists a supporting hyperplane $\mc{H}$ for
$\mathscr{P}_d(K)$ through the point $p$.
Let $q \in \re[x]_d$ be the normal direction for $\mc{H}$.
Then $p - \frac{1}{k} q \not\in \mathscr{P}_d(K)$ for all $k \in \N$.
For each $k$, there exists $x^{(k)} \in K$ such that
\begin{equation*}
 p\left( x^{(k)}\right) - \frac{1}{k} q\left(x^{(k)}\right) <0.
\end{equation*}
The sequence $\{x^{(k)}\}_{k=1}^\infty$ must be unbounded.
If otherwise it is bounded, then $\{x^{(k)}\}$ has an accumulation point
and one can assume $x^{(k)} \rightarrow x^* \in K$, without loss of generality.
Hence, we get
\[
p(x^*)  =  \lim\limits_{k \rightarrow \infty} p\left(x^{(k)}\right)-\lim\limits_{k \rightarrow \infty}
\frac{1}{k} q\left(x^{(k)}\right) \leq 0,
\]
which contradicts that $p\left|_{ K}\right. >0$.
Let $y^{(k)}=\frac{x^{(k)}}{\|x^{(k)}\|}$, then the sequence $\{y^{(k)}\}_{k=1}^\infty$ is bounded.
Similarly, we can  assume that $y^{(k)} \rightarrow y^*$.
Clearly, $\|y^*\|=1$. Note that
\[
c^\hm_{i}(y^*) = \lim\limits_{k \rightarrow \infty}
 \frac{c_i(x^{(k)})}{\|x^{(k)}\|^{\deg(c_i)}}
\]
for every $i$. Therefore,
%
%
\[
c^\hm_{i}(y^*) =0 ~(i \in \mathcal{E}), \quad
c^\hm_{j}(y^*)  \geq0 ~(j \in \mathcal{I}) .
\]
This means that $y^* \in K^e$. Furthermore, we have
\begin{equation*}
p^{(d)}(y^*)=\lim\limits_{k\rightarrow \infty}\frac{p(x^{(k)})-
\frac{1}{k} q(x^{(k)})}{ \|x^{(k)}\|^d } \leq 0.
\end{equation*}
It contracts that $p^{(d)}\left|_ {K^{e}} >0 \right.$.
Hence, we have $p \in int(\mathscr{P}_d(K))$.
 	
(ii) Suppose $p\left|_{ K}\right. > 0$, $p^{(d)}|_ {K^{e}} >0$.
By the item (i), we have $p \in int(\mathscr{P}_{d}(K))$,
and hence $p \in int(\pk)$.

On the another hand, suppose  $p \in int(\pk)$.
Let $q$ be a polynomial  in $ \pk \cap int(\mathscr{P}_{d}(K))$.
For $\epsilon>0$ small enough, we have $p-\epsilon q \in \pk$.
Clearly, for  $  x \in K$, $p(x) \geq \epsilon q(x)>0 $.
Moreover, the polynomial $p^\hm-\epsilon q^\hm \geq 0$ on $K^e$,
which implies $p^\hm\left|_ {K^{e}} >0 \right.$.
If $\deg(p) < d $, let $x^{(k)} \in K$ be a sequence such that $\|x^{(k)}\| \rightarrow \infty$.
Denote $y^{(k)}=\frac{x^{(k)}}{\|x^{(k)}\|}$.
Without loss of generality, assume that $y^{(k)} \rightarrow y^*$.
Then we have $y^* \in K^e$ and the following holds
\begin{equation*}
  -\epsilon q^{(d)}(y^*) =
 \lim\limits_{k\rightarrow \infty}\frac{p(x^{(k)}) -
 \epsilon q(x^{(k)})}{\|x^{(k)}\|^{d}} \geq 0,
\end{equation*}
which contradicts that $q^{(d)} \left|_ {K^{e}} >0 \right.$.
Thus, we have $\deg(p) = d$ and $p^{(d)}|_ {K^{e}} >0$.
 \end{proof}

\begin{remark}
%
%
%
It is possible that $\mathscr{P}_d(K)$ has empty interior.
For instance, this is the case for $K=\r^n$ and $d=2k+1$.
For such cases, we should consider the relative interior of $\mathscr{P}_d(K)$.
Clearly, the relative interior of $\mathscr{P}_{2k+1}(\re^n)$
is the interior of $\mathscr{P}_{2k}(\re^n)$.
When $K$ is a general set, it is typically difficult to character the relative interior
of $\mathscr{P}_d(K)$. Moreover, when  $\r[x]_{\a}\cap int(\mathscr{P}_{d}(K)) = \emptyset$,
the conclusion of Theorem~\ref{pro3.7} may not hold.
For instance, when $K= \r^2$
and $\a=\{(2,0),(0,2),(2,2)\}$,
the polynomial $p=x_1^2+x_2^2+x_1^2x_2^2 \in int(\pk)$
but $p^\hm(x)=x_1^2x_2^2$ is not positive  on $K^e$.
\end{remark}

For the power set $\a\subseteq \n^n$,
let $d  = \deg(\A)$. Its homogenization is the set
\[
\tilde{\a}   \coloneqq  \{ \beta \in \n^{n+1}  \mid \beta  =
(d - |\alpha|,\alpha),~\alpha  \in \a \}.
\]

\begin{pro}  \label{lem3.9}

Suppose $K$ is  closed at $\infty$ and $\r[x]_{\a}\cap int(\mathscr{P}_{d}(K)) \ne \emptyset$.
Then, we have $p \in int(\pk)$
if and only if $\hat{p}\left|_{ \widetilde{K}}\right. >0$, where $\hat{p}$ denotes the degree-d homogenization of $p$.
\end{pro}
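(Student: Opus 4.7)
The plan is to use Theorem~\ref{pro3.7}(ii) to reduce the claim to the pair of pointwise conditions $p|_K > 0$ and $p^{(d)}|_{K^e} > 0$, and then link these to positivity of $\hat{p}$ on $\widetilde{K}$ through the natural stratification of $\widetilde{K}$ by the sign of $x_0$.

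The two strata are read off directly from \reff{nn}. On the open stratum $\widetilde{K} \cap \{x_0 > 0\}$, homogeneity of each $\tilde{c}_i$ identifies this set with the image of $K$ under the radial lift $y \mapsto (1,y)/\sqrt{1+\|y\|^2}$; in particular $(x_0,x)$ lies there if and only if $x/x_0 \in K$. On the boundary stratum $\widetilde{K} \cap \{x_0 = 0\}$, the relations $\tilde{c}_i(0,x) = c_i^\hm(x)$ together with $\|x\| = 1$ cut out exactly $\{0\} \times K^e$. Finally, since $\hat{p}$ is homogeneous of degree $d$, one has $\hat{p}(x_0,x) = x_0^d\, p(x/x_0)$ when $x_0 > 0$ and $\hat{p}(0,x) = p^{(d)}(x)$.

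For the forward direction I would assume $p \in int(\pk)$, invoke Theorem~\ref{pro3.7}(ii) to obtain $p|_K > 0$ and $p^{(d)}|_{K^e} > 0$, and then conclude $\hat{p} > 0$ on each stratum using the two displayed formulas. For the reverse direction I would run the correspondence backwards: positivity of $\hat{p}$ at $(1,y)/\sqrt{1+\|y\|^2}$ for $y \in K$ gives $(1+\|y\|^2)^{-d/2}\, p(y) > 0$, hence $p(y) > 0$; positivity of $\hat{p}$ at $(0,u)$ for $u \in K^e$ gives $p^{(d)}(u) > 0$; a second application of Theorem~\ref{pro3.7}(ii) then supplies $p \in int(\pk)$.

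The main subtlety I anticipate is the degenerate case $\deg(p) < d$: then $p^{(d)} \equiv 0$, so the condition $p^{(d)}|_{K^e} > 0$ forces $K^e = \emptyset$, while on the homogenized side $\hat{p}(0,\cdot) \equiv 0$ forces $\widetilde{K} \cap \{x_0 = 0\}$ to be empty. These two emptiness conditions agree precisely because of the stratum identification above, and the hypothesis $\r[x]_\a \cap int(\mathscr{P}_d(K)) \ne \emptyset$ keeps us in the regime where Theorem~\ref{pro3.7}(ii) applies, so the closed-at-infinity assumption enters only through that reduction.
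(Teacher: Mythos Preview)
Your proposal is correct and follows essentially the same approach as the paper: reduce via Theorem~\ref{pro3.7}(ii) to the pair of conditions $p|_K>0$ and $p^{(d)}|_{K^e}>0$, and then identify these with positivity of $\hat{p}$ on the two strata $\widetilde{K}\cap\{x_0>0\}$ and $\widetilde{K}\cap\{x_0=0\}=\{0\}\times K^e$. The paper's proof is a terse two-line version of exactly this argument; your write-up supplies the details (the stratum identification, the formulas $\hat{p}(x_0,x)=x_0^d p(x/x_0)$ and $\hat{p}(0,x)=p^{(d)}(x)$, and the degenerate case $\deg(p)<d$) that the paper leaves implicit.
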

\begin{proof}
By the assumptions,
$p \in int(\pk)$ is equivalent to that
$ p\left|_{ K}\right. >0, p^{(d)}\left|_ {K^{e}}\right. >0$
(see Theorem~\ref{pro3.7}). This holds if and only if $\hat{p}\left|_{ \widetilde{K}}\right. >0$.
\end{proof}

\subsection{Semidefinite relaxations of $\rk$ and $\pk$}
\label{sec3.2}

Each $\a$-tms $y \in \r^{\a}$ is uniquely determined by its homogenization
$\tilde{y} \in \r^{\tilde{\a}}$ such that
\[
\tilde{y}_{(d-|\alpha|,\alpha)}  \,  =  \,
y_{\alpha} \, (\alpha \in \a),
\]
where $d  = \deg(\A)$. We define the Riesz functional
$\mathscr{L}_{\tilde{y} }$ acting on $\r[\tilde{x}]_{\tilde{\a}}$  as
 \begin{equation*}
 \mathscr{L}_{\tilde{y}}(\sum_{\alpha \in \a}
 p_{\alpha} x_0^{d-|\alpha|} x^{\alpha})=\sum_{\alpha \in \a} p_{\alpha} y_{\alpha}.
\end{equation*}

\begin{thm} \label{thm3.10}
Suppose $K$ is closed at $\infty$ and
$\r[x]_{\a}\cap int(\mathscr{P}_{d}(K)) \ne \emptyset$.
Then, we have $y \in cl(\mathscr{R}_{\a}(K))$ if and only if
$\tilde{y} \in \mathscr{R}_{\tilde{\a}}(\widetilde{K})$.
\end{thm}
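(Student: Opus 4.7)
The plan is to establish both directions by moving between Borel measures on $K$ and $\widetilde{K}$, exploiting the compactness of $\widetilde{K}$, the hypothesis $\r[x]_{\a}\cap int(\mathscr{P}_{d}(K)) \ne \emptyset$, and the closedness of $K$ at infinity.

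First, I would observe that $\mathscr{R}_{\tilde{\a}}(\widetilde{K})$ is a \emph{closed} convex cone. Indeed, by Proposition~\ref{lem3.9}, the hypothesis provides a polynomial $\hat p \in \r[\tilde{x}]_{\tilde{\a}}$ with $\hat p > 0$ on the compact set $\widetilde{K}$, so item~(i) of Theorem~\ref{thm3.1} applies. The direction ``$y \in cl(\mathscr{R}_{\a}(K)) \Rightarrow \tilde{y} \in \mathscr{R}_{\tilde{\a}}(\widetilde{K})$'' then reduces, by continuity of the bijection $y \mapsto \tilde{y}$ and the closedness just noted, to the implication $y \in \mathscr{R}_{\a}(K) \Rightarrow \tilde{y} \in \mathscr{R}_{\tilde{\a}}(\widetilde{K})$. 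Given a representing measure $\mu$ of $y$ supported in $K$, I would define $\tilde{\mu}$ as the pushforward of the measure $(1+\|u\|^{2})^{d/2}\mathrm{d}\mu(u)$ under the radial map $u \mapsto (1,u)/\sqrt{1+\|u\|^{2}}$. Since this map lands in $\widetilde{K}$ (the constraints $\tilde{c}_i$ are homogeneous and the image is on the sphere) and $\tilde{x}^\beta$ evaluates to $u^\af/(1+\|u\|^2)^{d/2}$ at the image point for $\beta = (d-|\af|,\af)$, a direct calculation gives $\int \tilde{x}^\beta \mathrm{d}\tilde{\mu} = \int u^\af \mathrm{d}\mu = y_\af = \tilde{y}_\beta$.

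For the converse ``$\tilde{y} \in \mathscr{R}_{\tilde{\a}}(\widetilde{K}) \Rightarrow y \in cl(\mathscr{R}_{\a}(K))$'', I would apply Theorem~\ref{thm2.1} on $H = \r[\tilde{x}]_{\tilde{\a}}$ together with the positivity of $\hat p$ to reduce to a finitely atomic representing measure $\tilde{\mu}=\sum_{k=1}^{r}\lambda_{k}\delta_{(a_k,b_k)}$ with $(a_k,b_k)\in\widetilde{K}$ and $\lambda_k>0$. Split the atoms into those with $a_k>0$ and those with $a_k=0$. For each atom with $a_k>0$, dehomogenization yields $w_k \coloneqq b_k/a_k \in K$ (because $c_i(w_k) = a_k^{-\deg(c_i)}\tilde{c}_i(a_k,b_k)$ has the correct sign for each $i$), and the reweighted atom $\lambda_{k}a_k^d\delta_{w_k}$ contributes exactly $\lambda_{k}a_k^{d-|\af|}b_k^\af$ to the $\af$-th moment. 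For each atom with $a_k=0$, one has $\|b_k\|=1$ and $c_i^{\hm}(b_k)=0$, $c_j^{\hm}(b_k)\ge 0$, so $b_k\in K^{e}$; the closedness-at-infinity hypothesis $cl(\widetilde{K}^{c})=\widetilde{K}^{h}$ then produces a sequence in $\widetilde{K}^{c}$ converging to $(0,b_k)$, and normalization to the sphere yields approximants $(\hat{a}_{k}^{(\ell)},\hat{b}_{k}^{(\ell)})\in\widetilde{K}$ with $\hat{a}_{k}^{(\ell)}>0$ and corresponding dehomogenizations $\hat{u}_{k}^{(\ell)}\coloneqq \hat{b}_{k}^{(\ell)}/\hat{a}_{k}^{(\ell)}\in K$.

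Assembling these pieces, I would take
\[
\mu^{(\ell)} \, \coloneqq \, \sum_{k:\,a_k>0}\lambda_{k}a_k^d\delta_{w_k}
\;+\; \sum_{k:\,a_k=0}\lambda_{k}(\hat{a}_{k}^{(\ell)})^d\delta_{\hat{u}_{k}^{(\ell)}},
\]
a finitely atomic $K$-representing measure for some $y^{(\ell)} \in \mathscr{R}_{\a}(K)$. The key computation is that, as $\ell \to \infty$, the boundary contribution $\lambda_{k}(\hat{a}_{k}^{(\ell)})^{d-|\af|}(\hat{b}_{k}^{(\ell)})^\af$ tends to $\lambda_{k}b_k^\af$ when $|\af|=d$ and to $0$ when $|\af|<d$, matching the $\af$-th moment contribution of $(a_k,b_k)$ to $\tilde{y}$. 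Hence $y^{(\ell)}\to y$, proving $y\in cl(\mathscr{R}_{\a}(K))$. The main obstacle I foresee is precisely the handling of atoms with $a_k=0$: the closedness-at-infinity hypothesis is essential, since without it the limit point $(0,b_k)\in\widetilde{K}$ might be isolated from $\widetilde{K}^{c}$, and the ``moment at infinity'' carried by such atoms could not be approached from below by $K$-representing measures.
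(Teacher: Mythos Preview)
Your argument is correct and takes a genuinely different, more constructive route than the paper. The paper's proof is purely dual: for the forward implication it checks that if $\mathscr{L}_y$ is $K$-positive (equivalently $y\in cl(\mathscr{R}_{\a}(K))$, by Theorem~\ref{thm3.1}) then $\mathscr{L}_{\tilde y}$ is $\widetilde K$-positive, since any $\ell \in \mathscr{P}_{\tilde\a}(\widetilde K)$ satisfies $\ell(1,x)\ge 0$ on $K$, and then invokes Theorem~\ref{thm2.1} on the compact set $\widetilde K$; for the converse it checks that $\mathscr{L}_{\tilde y}$ being $\widetilde K$-positive forces $\mathscr{L}_y$ to be $K$-positive, because the degree-$d$ homogenization of any $p\in\mathscr{P}_{\a}(K)$ is nonnegative on $\widetilde K$ by Lemma~\ref{lem3.3} (this is where closedness at $\infty$ enters), and then again applies Theorem~\ref{thm3.1}. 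You instead push measures back and forth explicitly; in particular, your converse direction is precisely the construction the paper records separately as Remark~\ref{approa} rather than as the proof itself. Your approach buys an explicit sequence of finitely atomic $K$-measures approximating $y$, which is computationally meaningful; the paper's approach buys brevity and a clean reuse of the duality already set up in Theorem~\ref{thm3.1}.

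One small caveat in your forward implication: the weighted measure $(1+\|u\|^{2})^{d/2}\,\mathrm{d}\mu(u)$ need not have finite total mass, since finiteness of the $\mathcal{A}$-moments of $\mu$ does not control $\int (1+\|u\|^{2})^{d/2}\,\mathrm{d}\mu$ in general (the monomials appearing need not lie in $\mathcal{A}$, and for odd $d$ this weight is not even a polynomial). The fix is immediate: first replace $\mu$ by a finitely atomic $K$-representing measure of $y$ (Richter--Tchakaloff), after which the pushforward is a finite sum of Diracs on $\widetilde K$.
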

\begin{proof}
Suppose $y \in cl(\mathscr{R}_{\a}(K))$, then the Reisz functional
$\l_y$ is $K$-positive (see Theorem \ref{thm3.1}). If $\ell \in \mathscr{P}_{\tilde{\a}} (\widetilde{K})$, then $\ell(1,x) \geq 0$ for  $x \in K$. Hence, we have
\[
\l_{\tilde{y}}(\ell(\tilde{x}))=\l_{y}(\ell(1,x)) \geq 0,
\]
which implies $\l_{\tilde{y}}$ is $\widetilde{K}$-positive.
Note that $\widetilde{K}$ is compact and  there  exists a polynomial  $ \eta\in \r[\tilde{x}]_{\tilde{\a}}$ such that $\eta\left|_{ \widetilde{K}}\right. >0$
(see Proposition \ref{lem3.9}),  thus conditions of Theorem \ref{thm2.1}
are satisfied and $\tilde{y} \in \mathscr{R}_{\tilde{\a}}(\widetilde{K})$.
On the another hand, if $\tilde{y} \in \mathscr{R}_{\tilde{\a}}(\widetilde{K})$,
then $\l_{\tilde{y}}$ is $\widetilde{K}$-positive.
 For  $p\in  \pk$, the following holds
\[
\sum_{\alpha \in \a} p_{\alpha} x_0^{d-|\alpha|} x^{\alpha}=x_0^{d-\deg(p)}
\tilde{p} \geq 0,~ \forall \tilde{x} \in \widetilde{K}.
\]
Hence, we have
\[
\l_y(p) = \mathscr{L}_{\tilde{y}}
(\sum_{\alpha \in \a} p_{\alpha} x_0^{d-|\alpha|} x^{\alpha}) \geq 0.
\]
By Theorem \ref{thm3.1}, it implies that $y \in cl(\mathscr{R}_{\a}(K))$.
\end{proof}

\begin{remark}\label{approa}
It is possible that $y \in cl(\mathscr{R}_{\a}(K))$
while $y$ admits no $K$-representing measures. In such case,
 $y$ is the limit of a sequence of
$\A$-truncated multi-sequences that admit finitely atomic $K$-representing measures.
When $y \in cl(\mathscr{R}_{\a}(K))$, we have
$\tilde{y} \in \mathscr{R}_{\tilde{\a}}(\widetilde{K})$.
Suppose the measure
$\nu = \tilde{\lambda}_{1} \delta_{(\tau_1, v_1)} + \cdots +
	\tilde{\lambda}_{r} \delta_{(\tau_r, v_r)}$
is a finitely atomic $\widetilde{K}$-representing measure of $\tilde{y}$, where $(\tau_i, v_i) \in \widetilde{K}$, $\tilde{\lambda}_{i}>0$. If $y \notin \mathscr{R}_{\a}(K)$,  then there exist $1\leq i_1,\dots,i_t\leq r$ such that  $\tau_{i_1}=\cdots=\tau_{i_t}=0$.
When $K$ is closed at $\infty$, we have $\tilde{u}^{(i_1,k)}\rightarrow(0, v_{i_1}),\dots,\tilde{u}^{(i_t,k)}\rightarrow (0, v_{i_t})$  for
\[
\tilde{u}^{(i_1,k)}=({u}^{(i_1,k)}_0,{u}^{(i_1,k)}),\dots,
\tilde{u}^{(i_t,k)}=({u}^{(i_t,k)}_0,{u}^{(i_t,k)}) \in \widetilde{K}^c .
\]
Let $\mu^{(k)} \coloneqq \lmd_{1}^{(k)} \delta_{u_1^{(k)}} +
\cdots + \lmd_{r}^{(k)} \delta_{u_r^{(k)}}$, where
\[
\lmd_i^{ (k)}  = \left\{\baray{l}
\tilde{\lambda}_{i}\tau_i^d~\text{ if } i\neq i_1,\dots,i_t,\\
\tilde{\lambda}_{i}({u}^{(i,k)}_0)^d~\text{ if } i= i_1,\dots,i_t,\\
\earay
\right.
\]
\[
u_i^{(k)}=\left\{\baray{l}
\frac{\nu_i}{\tau_i}~\text{ if } i\neq i_1,\dots,i_t,\\
\frac{{u}^{(i,k)}}{{u}^{(i,k)}_0}~\text{ if } i= i_1,\dots,i_t.\\
\earay
\right.
\]
Let $z^{(k)}=\int_K [x]_{\a} \mathrm{~d}\mu^{(k)}$ for each $k$,
then $z^{(k)}\in \mathscr{R}_{\a}(K)$ and
$z^{(k)}\rightarrow y$ as $k \to \infty$.
\end{remark}

For the semialgebraic set $K$ as in \reff{1.1},
denote the constraining polynomial tuples
\be \label{cert}
\boxed{
\baray{rcl}
\tilde{c}_{eq} & \coloneqq &  \left(\tilde{c}_{i}(\tilde{x})\right)_{i \in \mathcal{E}} \cup\left\{\|\tilde{x}\|^{2}-1\right\}, \\
\tilde{c}_{in} & \coloneqq &  \left(\tilde{c}_{j}(\tilde{x})\right)_{j \in \mathcal{I}} \cup\left\{x_{0}\right\}
\earay
}.
\ee
In the following, we give a certificate about nonexistence of $K$-representing measures.

\begin{lemma} \label{infeasible}
Suppose $K$ is closed at $\infty$ and
$\r[x]_{\a}\cap int(\mathscr{P}_{d}(K)) \ne \emptyset$.
Then $y \notin cl(\mathscr{R}_{\a}(K))$ if and only if there exists
$q\in \r[\tilde{x}]_{\tilde{\a}}$ such that
\begin{equation}  	\label{pro3.11}
 \l_{\tilde{y}}(q) <0,  \quad
q \in \ideal{\tilde{c}_{eq}}+\qmod{\tilde{c}_{in}}.
\end{equation}
\end{lemma}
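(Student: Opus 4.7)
The plan is to prove the equivalence via Theorem \ref{thm3.10}, which transfers the membership $y \in cl(\rk)$ to the compact-set membership $\tilde{y} \in \mathscr{R}_{\tilde{\a}}(\widetilde{K})$, and then combine a Hahn--Banach separation argument with Putinar's Positivstellensatz (Theorem \ref{thm2.2}) on $\widetilde{K}$.

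For the easy direction (sufficiency), suppose such a $q$ exists. Every element of $\ideal{\tilde{c}_{eq}}$ vanishes identically on $\widetilde{K}$ (since $\|\tilde{x}\|^2-1=0$ and $\tilde{c}_i=0$ on $\widetilde{K}$), and every element of $\qmod{\tilde{c}_{in}}$ is nonnegative on $\widetilde{K}$ by construction; hence $q \geq 0$ on $\widetilde{K}$, i.e. $q \in \mathscr{P}_{\tilde{\a}}(\widetilde{K})$. If one had $y \in cl(\rk)$, Theorem \ref{thm3.10} would give $\tilde{y} \in \mathscr{R}_{\tilde{\a}}(\widetilde{K})$, and the duality relation \reff{dual:PARA(K)} applied to the compact set $\widetilde{K}$ would force $\mathscr{L}_{\tilde{y}}(q) \geq 0$, contradicting the assumption.

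For the necessity direction, assume $y \notin cl(\rk)$, so by Theorem \ref{thm3.10}, $\tilde{y}\notin \mathscr{R}_{\tilde{\a}}(\widetilde{K})$. Since $\widetilde{K}$ is compact and the hypothesis $\rx_{\a}\cap int(\mathscr{P}_{d}(K)) \ne \emptyset$ together with Proposition \ref{lem3.9} supplies a polynomial in $\r[\tilde{x}]_{\tilde{\a}}$ strictly positive on $\widetilde{K}$, item (i) of Theorem \ref{thm3.1} guarantees that $\mathscr{R}_{\tilde{\a}}(\widetilde{K})$ is a closed convex cone. By Hahn--Banach separation of $\tilde{y}$ from this cone, there is $p\in \r[\tilde{x}]_{\tilde{\a}}$ with $\mathscr{L}_{\tilde{y}}(p)<0$ and $\langle p, z\rangle\geq 0$ for all $z\in \mathscr{R}_{\tilde{\a}}(\widetilde{K})$; the latter condition says precisely $p\in \mathscr{P}_{\tilde{\a}}(\widetilde{K})$, i.e., $p\geq 0$ on $\widetilde{K}$.

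The key step is to upgrade $p$ from merely nonnegative to strictly positive on $\widetilde{K}$ while preserving both the homogeneity type and the sign of $\mathscr{L}_{\tilde{y}}$. Using the hypothesis, pick $p_0 \in \r[x]_\a \cap int(\mathscr{P}_d(K))$; by Proposition \ref{lem3.9}, its degree-$d$ homogenization $\hat{p}_0$ lies in $\r[\tilde{x}]_{\tilde{\a}}$ and satisfies $\hat{p}_0>0$ on $\widetilde{K}$. Then $q \coloneqq p+\varepsilon \hat{p}_0$ belongs to $\r[\tilde{x}]_{\tilde{\a}}$, is strictly positive on $\widetilde{K}$ for any $\varepsilon>0$, and for $\varepsilon>0$ small enough still satisfies $\mathscr{L}_{\tilde{y}}(q)<0$. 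Since $\|\tilde{x}\|^2-1$ appears in $\tilde{c}_{eq}$, the sum $\ideal{\tilde{c}_{eq}}+\qmod{\tilde{c}_{in}}$ is archimedean, so Putinar's Positivstellensatz applied to $q$ produces the required representation $q\in \ideal{\tilde{c}_{eq}}+\qmod{\tilde{c}_{in}}$. The main obstacle I anticipate is precisely this last synthesis: ensuring that the perturbation stays inside the prescribed monomial subspace $\r[\tilde{x}]_{\tilde{\a}}$ of homogeneous degree-$d$ forms, which is exactly what the hypothesis $\r[x]_{\a}\cap int(\mathscr{P}_{d}(K)) \ne \emptyset$ is designed to guarantee.
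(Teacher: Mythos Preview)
Your proof is correct and follows essentially the same route as the paper: transfer to $\widetilde{K}$ via Theorem~\ref{thm3.10}, separate $\tilde{y}$ from the closed cone $\mathscr{R}_{\tilde{\a}}(\widetilde{K})$ to obtain a nonnegative $p$, perturb by a strictly positive $\hat{p}_0\in\r[\tilde{x}]_{\tilde{\a}}$ supplied by Proposition~\ref{lem3.9}, and apply Putinar's Positivstellensatz using the archimedeanity coming from $\|\tilde{x}\|^2-1\in\tilde{c}_{eq}$. If anything, your write-up is slightly more explicit than the paper's in justifying the separation step via the closedness of $\mathscr{R}_{\tilde{\a}}(\widetilde{K})$ from Theorem~\ref{thm3.1}(i).
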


\begin{proof}
If $y \notin cl(\mathscr{R}_{\a}(K))$, then $\tilde{y} \notin
\mathscr{R}_{\tilde{\a}}(\widetilde{K})$ and there exists a polynomial
$q_1 \in \mathscr{P}_{\tilde{\a}} (\widetilde{K})$ such that
$\l_{\tilde{y}}(q_1) <0$ (cf. Theorem \ref{thm3.10}). 	
By Proposition \ref{lem3.9}, there  exists a polynomial
$\eta \in \r[\tilde{x}]_{\tilde{\a}}$  such that
$\eta\left|_{ \widetilde{K}}\right. >0$.
%
%
For  $\epsilon >0$ small enough,  we have
 $$
\left. q_1+\epsilon \eta \right|_{\widetilde{K}} >0,~
\l_{\tilde{y}}(q_1+\epsilon \eta) <0.
 $$
Note that  $\widetilde{K}$ is archimedean. By Theorem \ref{thm2.2}, the following holds
\[
q_1+\epsilon \eta \in \ideal{\tilde{c}_{eq}}+\qmod{\tilde{c}_{in}}.
\]
Hence, the polynomial $q \coloneqq q_1+\epsilon \eta$ satisfies \reff{pro3.11}.
 For the contrary, suppose otherwise $y \in cl(\mathscr{R}_{n,d}(K))$.
 Clearly, if $q \in  \ideal{\tilde{c}_{eq}}+\qmod{\tilde{c}_{in}}$,
 then  $q \geq 0$ on $\widetilde{K}$. It implies that
 $\l_{\tilde{y}}(q) \geq 0$, which is a contradiction.
\end{proof}

In the following, we give convergent semidefinite relaxations for $\rk$ and $\pk$.
For each $k \in \n$,
we consider the following $k$th order relaxation for the cone $\mathscr{R}_{\a}(K)$
\be  \label{cone:Rk(K)}
\mathscr{R}^{(k)}(K)    \coloneqq
\left\{  y\in \r^\a
\left|\baray{l}
\tilde{y}= w|_{\tilde{\a}},\,
w \in \mathbb{R}^{\mathbb{N}_{2 k}^{n+1}} ,  \\
L_{\tilde{c}_{i}}^{(k)}[w] = 0~ (i\in \mathcal{E}), \\
L_{\|\tilde{x}\|^2-1 }^{(k)}[w] = 0, \\
L_{\tilde{c}_{j}}^{(k)}[w] \succeq 0~(j\in \mathcal{I}), \\
L_{x_0}^{(k)}[w] \succeq 0, \, M_k[w] \succeq 0
\earay \right.
\right\}.
\ee
Recall that the homogenization of a polynomial $p(x)$ is
$\tilde{p} \coloneqq  x_0^{\deg(p) } p(x/x_0)$.
The $k$th order SOS approximation for the cone $\mathscr{P}_{\a}(K)$ is
\begin{equation}
\mathscr{P}^{(k)}(K) \, \coloneqq \,
\Big\{ p \in \re[x]_{\A}: \,  \tilde{p} \in
\ideal{\tilde{c}_{eq}}_{2k}+\qmod{\tilde{c}_{in}}_{2k}
\Big \}.
\end{equation}
%
%

\begin{thm}
Suppose  $K$ is closed at $\infty$ and
$\r[x]_{\a}\cap int(\mathscr{P}_{d}(K)) \ne \emptyset$.
Then, we have that
\be \label{cvg:Ra(K)}
\bigcap\limits_{k=1}^{\infty} \mathscr{R}^{(k)}(K) = cl(\mathscr{R}_{\a}(K)),
\ee
\be \label{cvg:Pa(K)}
int\left(\mathscr{P}_{\a}(K)\right) \subseteq \bigcup_{k=1}^{\infty}
\mathscr{P}^{(k)}(K) \subseteq \mathscr{P}_{\mathcal{A}}(K).
\ee
\end{thm}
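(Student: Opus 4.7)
The plan is to split the claim into the moment-cone equality \reff{cvg:Ra(K)} and the polynomial-cone sandwich \reff{cvg:Pa(K)}, and for each to exploit the homogenization bridge already forged in Theorem~\ref{thm3.10}, Proposition~\ref{lem3.9}, Lemma~\ref{infeasible} and Lemma~\ref{lem3.3}. The structural fact driving everything is that $\widetilde{K}$ is compact and $\ideal{\tilde c_{eq}}+\qmod{\tilde c_{in}}$ is archimedean (because $\|\tilde x\|^2-1$ is in $\tilde c_{eq}$), which puts Putinar's theorem (Theorem~\ref{thm2.2}) at my disposal on the lifted side.

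For \reff{cvg:Ra(K)}, the inclusion $cl(\mathscr{R}_\a(K))\subseteq\bigcap_k\mathscr{R}^{(k)}(K)$ follows by taking $y\in cl(\mathscr{R}_\a(K))$, invoking Theorem~\ref{thm3.10} to get $\tilde y\in\mathscr{R}_{\tilde\a}(\widetilde{K})$, picking a finitely atomic $\widetilde{K}$-representing measure $\nu$ of $\tilde y$ (by Theorem~\ref{thm2.1} on the compact $\widetilde K$), and defining $w=\int [\tilde x]_{2k}\,\mathrm{d}\nu$; because $\nu$ is supported in $\widetilde K$, every localizing and moment-matrix condition in the definition of $\mathscr{R}^{(k)}(K)$ is automatic, and $w|_{\tilde\a}=\tilde y$. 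For the reverse inclusion I would argue by contradiction: if $y\in\bigcap_k\mathscr{R}^{(k)}(K)$ but $y\notin cl(\mathscr{R}_\a(K))$, Lemma~\ref{infeasible} supplies $q\in\r[\tilde x]_{\tilde\a}\cap(\ideal{\tilde c_{eq}}+\qmod{\tilde c_{in}})$ with $\mathscr{L}_{\tilde y}(q)<0$. A fixed-degree certificate for $q$ puts $q\in\ideal{\tilde c_{eq}}_{2k}+\qmod{\tilde c_{in}}_{2k}$ for every $k$ past some threshold $k_0$, so the localizing and moment-matrix conditions defining $\mathscr{R}^{(k)}(K)$ force $\mathscr{L}_w(q)\ge0$ for the corresponding witness $w$; since $q$ lives in $\r[\tilde x]_{\tilde\a}$ and $w|_{\tilde\a}=\tilde y$, the pairing reduces to $\mathscr{L}_{\tilde y}(q)<0$, a contradiction.

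For \reff{cvg:Pa(K)}, the right inclusion is direct: $p\in\mathscr{P}^{(k)}(K)$ means $\tilde p$ is a sum of a product ideal element and a quadratic-module element of bounded degree, so $\tilde p\ge0$ on $\widetilde K$, and Lemma~\ref{lem3.3} together with closedness at $\infty$ yields $p\ge0$ on $K$. For the left inclusion, take $p\in int(\mathscr{P}_\a(K))$; Proposition~\ref{lem3.9} gives $\hat p>0$ on the compact set $\widetilde K$, so archimedean Putinar (Theorem~\ref{thm2.2}) places $\hat p\in\ideal{\tilde c_{eq}}+\qmod{\tilde c_{in}}$, and truncating the certificate gives membership at some finite relaxation order $2k$.

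The main subtlety, and the step I expect to require the most care, is that $\mathscr{P}^{(k)}(K)$ is defined through $\tilde p$ (homogenization to degree $\deg p$) rather than through $\hat p$ (homogenization to degree $d$), and the two coincide only when $\deg p=d$. If $\deg p<d$, then $p^{(d)}\equiv0$, so the positivity condition $p^{(d)}|_{K^e}>0$ in Theorem~\ref{pro3.7}(ii) can hold only when $K^e=\emptyset$; in that case $\widetilde K\subseteq\{x_0>0\}$ with $x_0$ bounded away from $0$, and the identity $\hat p=x_0^{d-\deg p}\tilde p$ promotes $\hat p>0$ on $\widetilde K$ to $\tilde p>0$ on $\widetilde K$, after which a direct application of Putinar to $\tilde p$ finishes the argument. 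Handling this case split cleanly, so that the truncated certificate for $\tilde p$ (not $\hat p$) is produced in all situations, is the only non-routine point in the plan.
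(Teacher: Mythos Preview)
Your proposal is correct and follows essentially the same route as the paper: both inclusions of \reff{cvg:Ra(K)} are obtained via Theorem~\ref{thm3.10} and the separating certificate of Lemma~\ref{infeasible}, and both inclusions of \reff{cvg:Pa(K)} via Lemma~\ref{lem3.3} and Putinar's Positivstellensatz on the compact archimedean set $\widetilde K$. Your explicit treatment of the $\tilde p$ versus $\hat p$ discrepancy is in fact more careful than the paper, which simply asserts $\tilde p|_{\widetilde K}>0$ without comment; your case split (observing that $\deg p<d$ forces $p^{(d)}\equiv 0$, hence $K^e=\emptyset$ by Theorem~\ref{pro3.7}(ii), whence $x_0$ is bounded away from zero on $\widetilde K$ and $\tilde p=\hat p/x_0^{\,d-\deg p}>0$) cleanly closes the only point the paper leaves implicit.
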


\begin{proof}
In \reff{cvg:Ra(K)}, the containment ``$\supseteq$" follows from
\[
cl(\mathscr{R}_{\a}(K))=\mathscr{R}_{\tilde{\a}}(\widetilde{K})
\subseteq \bigcap\limits_{k=1}^{\infty} \mathscr{R}^{(k)}(K).
\]
To prove the reverse containment ``$\subseteq$",
it is enough to show that if $y \notin cl(\mathscr{R}_{\a}(K))$,
then $y \notin \mathscr{R}^{(k)}(K)$ for some $k$ big enough.
Suppose $y \notin cl(\mathscr{R}_{\a}(K))$, then there exists
$q \in \r[\tilde{x}]_{\tilde{\a}}$ satisfying
\begin{equation*}
\langle q, \tilde{y}\rangle <0, \, q \in
\ideal{\tilde{c}_{eq}}_{2k_1}+\qmod{\tilde{c}_{in}}_{2k_1},
\end{equation*}
for some integer $k_1$ (see Lemma \ref{infeasible}).
If $y \in \mathscr{R}^{(k_1)}(K)$, then
$\tilde{y}=\left.w\right|_{\tilde{\mathcal{A}}}$ for
some $w$ satisfying \reff{cone:Rk(K)}.
Since $w$ belongs to the dual cone of $\ideal{\tilde{c}_{eq}}_{2k_1}+\qmod{\tilde{c}_{in}}_{2k_1}$,
we get the contradiction
\[
0 > \langle q, \tilde{y}\rangle = \langle q, w\rangle \geq 0 .
\]
Thus, $y \notin \mathscr{R}^{(k)}(K)$ for all $k\geq k_1$.
So, the relation \reff{cvg:Ra(K)} holds.

Now, we prove \reff{cvg:Pa(K)}. If $p \in \mathscr{P}^{(k)}(K)$,
we have $\tilde{p} \geq 0$ on $\widetilde{K}$.
Then for $x\in K$, $p(x)=\tilde{p}(1,x) \geq 0$, which implies
$p \in\mathscr{P}_{\mathcal{A}}(K) $.
If $p \in int\left(\mathscr{P}_{\a}(K)\right)$,
then $\tilde{p}\left|_{ \widetilde{K}}\right. >0$.
By Theorem \ref{thm2.2},
we have $\tilde{p} \in \ideal{\tilde{c}_{eq}}+\qmod{\tilde{c}_{in}}$, i.e., $p\in
 \bigcup_{k=1}^{\infty} \mathscr{P}^{(k)}(K)$.
\end{proof}

\section{Solving GTMPs}
\label{Generalizedtkmp}

In this section, we give a Moment-SOS approach for
solving generalized truncated moment problems
with unbounded sets, based on homogenization.
Let $K$ be the semialgebraic set as in \reff{1.1}.
Let $\a \subseteq \n^n$ be a finite power set.
We have seen that the truncated moment system \reff{bi=<aiy>}
is equivalent to the existence of an $\a$-tms $y$ satisfying
\be \label{eq5.1}
\boxed{
\baray{rcl}
\langle a_i,y\rangle  & = & b_i \, \, (1 \le i \le m_1), \\
\langle a_i,y\rangle  & \geq &  b_i \, \, ( m_1 < i \le m), \\
y &\in & \mathscr{R}_{\a}(K)
\earay
}
\ee
for some given polynomials $a_1, \ldots, a_m \in \re[x]_{\a}$.

When $K$ is unbounded, a difficulty for solving \reff{eq5.1} is that the cone
$\mathscr{R}_{\a}(K)$ is typically not closed.
In computational practice, it is usually more convenient for
considering its closure moment system
\be 	\label{eqq5.6}
\boxed{
\baray{rcl}
\langle a_i,y\rangle  & = & b_i \, \, (1 \le i \le m_1),  \\
\langle a_i,y\rangle  & \geq &  b_i\, \, ( m_1 < i \le m),  \\
y &\in & cl(\mathscr{R}_{\a}(K)) 
\earay
}.
\ee
It is important to observe the following:
if the moment system $(\ref{eq5.1})$ is feasible,
then the closure system \reff{eqq5.6} must also be feasible;
if \reff{eqq5.6} is infeasible, then \reff{eq5.1} must also be infeasible.
There may exist boundary cases that
\reff{eq5.1} is infeasible while \reff{eqq5.6} is feasible,
which is a difficult case in computational practice.

Let $d=\deg(\a)$. Recall that each $y \in \re^{\A}$ can be viewed as a
homogeneous tms $\tilde{y}  \in \re^{\tilde{\a}}$ labelled such that
\[
\tilde{y}_{(d -|\alpha|,\alpha)} \, = \, y_{\alpha}
\quad \mbox{for every} \quad \alpha \in \A.
\]
%
%
For each $i=1,\dots,m$, let
\[
\hat{a}_i  \, \coloneqq \,  x_0^{d}a_i( x/x_0 )
\]
be the degree-$d$ homogenization of $a_i$.
Note that $d > \deg(a_i)$ is possible.
For a degree $d_1 > d$, select a generic polynomial $f \in \re[\tilde{x}]_{d_1}$
and consider the following linear conic optimization problem
\be  \label{eq5.2}
\left\{ \baray{rl}
\min & \langle f, w\rangle  \\
\st & \langle \hat{a}_i,w\rangle   =  b_i \, \, ( 1 \le i \le m_1),\\
&\langle \hat{a}_i,w\rangle   \geq  b_i \, \, ( m_1 < i \le m), \\
 &  w \in \mathscr{R}_{d_1}(\widetilde{K}).
\earay \right.
\ee
The dual optimization problem of \reff{eq5.1} is
\be  \label{eq5.3}
\left\{ \baray{rl}
\max & \sum\limits_{i=1}^m b_i\theta_i  \\
\st &  f - \sum\limits_{i=1}^m \theta_i\hat{a}_i
    \in \mathscr{P}_{d_1}(\widetilde{K}),\\
    & \theta_i \geq 0 \,\, (m_1 < i \le m).
\earay \right.
\ee
For convenience, denote the set
\be \label{set:H0}
H_0 = \mbox{Span} \{a_1, \ldots, a_{m_1} \} -
\mbox{Cone} \{a_{m_1+1}, \ldots, a_{m} \}.
\ee
The following are some properties of the pair \reff{eq5.2}-\reff{eq5.3}.

\begin{pro} \label{pro1}
Let $d_1 > d $ be even.	Suppose the system \reff{eq5.1} is feasible.
\bit

\item[(i)]
If  $f \in int(\Sigma[\tilde{x}]_{d_1})$, then  strong duality holds
and the minimum value of \reff{eq5.2} is achievable.

\item[(ii)]
Suppose $f$ is generic in $\Sigma[\tilde{x}]_{d_1}$,
then the optimization problem $(\ref{eq5.2})$ has a unique minimizer $w^*$.
Moreover, every $\widetilde{K}$-representing measure $\mu$ of $w^*$
is $r$-atomic with $r \leq m$.

\eit
\end{pro}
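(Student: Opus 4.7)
The plan is to view \reff{eq5.2}--\reff{eq5.3} as a pair of dual linear conic programs over the moment cone $\mathscr{R}_{d_1}(\widetilde{K})$ and its dual $\mathscr{P}_{d_1}(\widetilde{K})$. Since $\widetilde{K}$ is a compact subset of the unit sphere in $\r^{n+1}$, Theorem~\ref{thm3.1} guarantees that both cones are closed and have nonempty interior, while Theorem~\ref{thm3.10} identifies feasibility of \reff{eq5.1} with feasibility of the homogenized moment problem. The hypothesis $d_1 > d$ makes each $\hat{a}_i$ a well-defined homogeneous polynomial of degree $d_1$ in $\tilde{x}$.

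For part (i), I first lift primal feasibility from \reff{eq5.1}: given a representing measure $\mu = \sum_k \lambda_k \delta_{u_k}$ for a feasible $y$, the pushed-forward and rescaled measure $\nu \coloneqq \sum_k \lambda_k \|(1,u_k)\|^{d_1} \delta_{\tilde{u}_k}$, with $\tilde{u}_k \coloneqq (1,u_k)/\|(1,u_k)\| \in \widetilde{K}$, has moment vector $w$ satisfying $\langle \hat{a}_i, w\rangle = \langle a_i, y\rangle$ by a direct computation using the degree-$d_1$ homogeneity of $\hat{a}_i$. Next I verify Slater on the dual: since $f \in int(\Sigma[\tilde{x}]_{d_1})$ is strictly positive on $\r^{n+1}$, and in particular on $\widetilde{K}$, Theorem~\ref{thm3.1}(i) yields $f \in int(\mathscr{P}_{d_1}(\widetilde{K}))$; choosing $\theta_i = 0$ for $i \le m_1$ and $\theta_i = \epsilon > 0$ sufficiently small for $i > m_1$ keeps $f - \sum_i \theta_i \hat{a}_i$ strictly positive on $\widetilde{K}$, producing a strictly feasible dual point. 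Standard conic LP duality then delivers both strong duality and attainment of the primal minimum.

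For part (ii), let $w^*$ be a primal optimizer and $\theta^*$ any dual optimizer, and set $p^* \coloneqq f - \sum_i \theta_i^* \hat{a}_i \in \mathscr{P}_{d_1}(\widetilde{K})$. Complementary slackness $\langle p^*, w^*\rangle = 0$ forces every atom $\tilde{u}_k$ of any $\widetilde{K}$-representing measure $\mu = \sum_{k=1}^r \lambda_k \delta_{\tilde{u}_k}$ of $w^*$ to lie in the zero set of $p^*$ on $\widetilde{K}$. Uniqueness of $w^*$ for generic $f$ follows from the standard convex-analytic fact that the set of linear functionals attaining their minimum on a positive-dimensional face of a compact convex set is a nowhere-dense semialgebraic subset of $\r[\tilde{x}]_{d_1}$, hence avoided by generic $f \in int(\Sigma[\tilde{x}]_{d_1})$. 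For the bound $r \leq m$ I would argue by contradiction: if $r > m$, the matrix $A \in \r^{m \times r}$ with $A_{ij} = \hat{a}_i(\tilde{u}_j)$ has nontrivial kernel, and for each $\eta \in \ker A$ the perturbed moments $w_\epsilon \coloneqq w^* + \epsilon \sum_k \eta_k [\tilde{u}_k]_{d_1}$ remain primal feasible for small $|\epsilon|$; optimality of $w^*$ forces $\sum_k \eta_k f(\tilde{u}_k) = 0$ and uniqueness then forces $\sum_k \eta_k [\tilde{u}_k]_{d_1} = 0$, so $\dim \mbox{Span}\,\{[\tilde{u}_k]_{d_1}\}_{k=1}^r \leq m$; for generic $f$ the $r$ moment vectors are in general position and hence linearly independent, yielding the desired contradiction.

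The most delicate step is turning this linear-algebraic conclusion into the stated bound $r \leq m$, since the atoms $\tilde{u}_k$ depend on $f$ in an implicit semialgebraic way through the optimization. The cleanest resolution is likely to reformulate uniqueness as the stronger extremality statement that $w^*$ is a vertex of the feasible region and then apply a Carath\'eodory-type count showing that every extreme moment vector in an affine slice of $\mathscr{R}_{d_1}(\widetilde{K})$ by $m$ linear constraints admits a finitely atomic representing measure supported on at most $m$ points.
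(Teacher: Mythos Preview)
Your treatment of part (i) is correct and essentially matches the paper: feasibility is lifted from \reff{eq5.1} to \reff{eq5.2} by pushing forward a representing measure to $\widetilde{K}$, and Slater for the dual follows since $f\in int(\Sigma[\tilde{x}]_{d_1})$ is strictly positive on the compact set $\widetilde{K}$, hence lies in $int(\mathscr{P}_{d_1}(\widetilde{K}))$ by Theorem~\ref{thm3.1}(i). One small correction: $\hat{a}_i=x_0^{d}a_i(x/x_0)$ is homogeneous of degree $d$, not $d_1$; the strict inequality $d_1>d$ matters later.

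In part (ii) the uniqueness-from-genericity step and the passage to extremality of $w^*$ are fine, and your perturbation argument correctly yields $\dim\mbox{Span}\{[\tilde{u}_k]_{d_1}\}_{k=1}^r\le m$ whenever $r>m$. The gap is the last line: asserting that ``for generic $f$ the $r$ moment vectors are in general position and hence linearly independent'' does not follow, because the atoms $\tilde{u}_k$ are not chosen independently of $f$---they are determined by $f$ (and by the choice of representing measure) through the optimization, so genericity of $f$ cannot be transferred to genericity of the $\tilde{u}_k$. Your final paragraph correctly diagnoses that this is the crux, but the Carath\'eodory-type count you propose is not obvious either: faces of the moment cone $\mathscr{R}_{d_1}(\widetilde{K})$ are not controlled by the number of linear constraints alone.

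The paper closes this gap by an algebraic argument that exploits the hypothesis $d_1>d$ in an essential way. Suppose every kernel vector $\eta$ of the evaluation matrix $\big(\hat a_i(\tilde u_k)\big)_{i\in[m_1]\cup J,\,k\le r}$ satisfies $\sum_k\eta_k[\tilde u_k]_{d_1}=0$. Then for every $p\in\re[\tilde{x}]_{d_1}$ the vector $\big(p(\tilde{u}_1),\ldots,p(\tilde{u}_r)\big)$ lies in the row span of $\big(\hat a_i(\tilde u_k)\big)$. Extending $\{\hat{a}_i:i\in[m_1]\cup J\}$ to a basis of $\re[\tilde{x}]_{d_1}$ by polynomials $p_1,\ldots,p_t$, one obtains $q_j\coloneqq p_j-\sum_i s_{j,i}\hat{a}_i$ with $q_j(\tilde u_k)=0$ for all $k$. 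Let $I^*=\ideal{q_1,\ldots,q_t}$. Because $\deg(\hat a_i)=d<d_1$, each product $x_\ell\hat a_i$ still lies in $\re[\tilde{x}]_{d_1}$, and an induction on degree shows the classes $[\hat a_i]$ span the entire quotient $\re[\tilde{x}]/I^*$, so $\dim\re[\tilde{x}]/I^*\le m_1+|J|\le m$. By the standard bound on the number of common zeros of a zero-dimensional ideal this forces $r\le m$, contradicting $r>m$. Hence a nonzero perturbation direction exists after all, contradicting extremality. The strict gap $d_1>d$ is precisely what makes this inductive step go through; your argument never uses it.
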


\begin{proof}
(i) Since $\widetilde{K}$ is compact, the cone $\mathscr{R}_{d_1}(\widetilde{K})$
is closed (see Theorem \ref{thm3.1}). The feasible set of \reff{eq5.2} is nonempty,
since \reff{eq5.1} is feasible.
When $f \in int(\Sigma[\tilde{x}]_{d_1})$,  the dual optimization \reff{eq5.3}
has an interior point.
Hence the strong duality holds and
the optimization \reff{eq5.2} must achieve its minimum value (see \cite{Bert}).

(ii) If $f$ is generic in $\Sigma[\tilde{x}]_{d_1}$, the objective
$\langle f, w\rangle$ is a generic linear functional. Note that
$\widetilde{K}$ is compact, the uniqueness can be  implied
by Proposition~5.2 of \cite{2014The}. Since the minimizer $w^*$ is unique,
it must be an  extreme  point of the convex set
\be \label{extre}
E = \left\{  w \in \mathscr{R}_{d_1}(\widetilde{K})
\left|\baray{l}
\langle \hat{a}_i,w\rangle   =  b_i \, ( 1 \le i \le m_1), \\
\langle \hat{a}_i,w\rangle   \geq  b_i \, (m_1 < i \le m)
\earay \right.
\right\}.
\ee
Suppose $\mu$ is a $\widetilde{K}$-representing measure  of $w^*$.
Let $J \subseteq \{ m_1+1,\ldots, m\}$
be the index set of active inequality constraints for $w^*$.
In the following, we prove that $\mu$ is $r$-atomic with $r\leq m$.

First, we consider the case that $\mu$ is finitely atomic,
say, $\mu=\lambda_{1}\delta_{u_1}+\cdots+\lambda_{r}\delta_{u_r}$,
where $u_i\in\r^{n+1}$ are distinct and $\lambda_{i}>0$.
We need to show that $r\leq m$. Suppose otherwise that $r> m$.
We claim that there exist $\lambda_{1}^*,\dots,\lambda_{r}^*\in \r$
and $p^*\in \r[\tilde{x}]_{d_1} \setminus \mbox{Span}\{\hat{a}_1,\dots,\hat{a}_m\}$
such that
\be   \label{atomee}
\boxed{
\baray{rcl}
\sum\limits_{k=1}^r \lambda^*_k \hat{a}_i(u_k) &=& 0 \,\,
 (i \in [m_1] \cup J),  \\
\sum\limits_{k=1}^r \lambda^*_k p^*(u_k) &\neq&   0
\earay
}.
\ee
If such $p^*$ and $\lmd_k^*$ do not exist, then for every $
p \in \r[\tilde{x}]_{d_1} \setminus \mbox{Span}\{\hat{a}_1,\dots,\hat{a}_m\}$,
the vector $(p(u_1),\dots,p(u_r))$ is a linear combination of the vectors
\[
(\hat{a}_i(u_1),\dots,\hat{a}_i(u_r)), i \in [m_1] \cup J .
\]
We can extend $\{ \hat{a}_i: i\in [m_1] \cup J \}$
to a basis of $\r[\tilde{x}]_{d_1}$ by choosing
\[
p_1,\dots,p_t \in \r[\tilde{x}]_{d_1}
\setminus \mbox{Span}\{\hat{a}_1,\dots,\hat{a}_m\}.
\]
For $j=1,\dots,t$ and $k=1,\dots,r$,
there exist scalars $s_{j,i}$ such that
\[
\sum\limits_{i \in [m_1] \cup J} s_{j,i} \hat{a}_i(u_k)= p_j(u_k).
\]
Let $q_j \coloneqq  p_j- \sum\limits_{i \in [m_1] \cup J} s_{j,i} \hat{a}_i$
for each $j=1,\dots,t$. Let $I^*$ be the ideal generated by $q_1,\dots,q_t$.
We show that the vectors $\left[\hat{a}_i\right]$,
with $i\in [m_1] \cup J$,
span the quotient space $\r[\tilde{x}] /I^*$.
For each $p\in \r[\tilde{x}]_{d_1}$, we write that
\[
p = \sum\limits_{i \in [m_1] \cup J} c_i\hat{a}_i+\sum\limits_{j=1}^t d_jp_j ,
\]
for some scalars $c_i, d_i \in \r$. Note that
\[
 p \equiv  \sum\limits_{i \in [m_1] \cup J} c_i\hat{a}_i+\sum\limits_{j=1}^t d_jp_j
 \equiv  \sum\limits_{i \in [m_1] \cup J} c_i\hat{a}_i+
 \sum\limits_{j=1}^t d_j
 \Big(\sum\limits_{i \in [m_1] \cup J}  s_{j,i} \hat{a}_i \Big)
 \quad \bmod \quad I^*.
\]
Thus, the equivalent class $[p]$ of $p$ can be spanned by vectors
$[\hat{a}_i]$ $(i\in [m_1] \cup J)$. For $\left|\beta\right|=d_1+1$, we have $x^{\beta}=x_jx^{\alpha}$ for some $j\in [n]$
and $\left|\alpha\right|=d_1$.
Since $d_1 >d$ and $\deg(x_j\hat{a}_i)\leq d_1$, it follows that
$[x^{\beta}]$ can be spanned by $[\hat{a}_i]$ $(i\in [m_1] \cup J)$.
By induction,  we know that the vectors
$[\hat{a}_i]$ $(i\in [m_1] \cup J)$
span $\r[\tilde{x}]/I^*$.
By Theorem 2.6 of \cite{laurent2009sums}, we have
$\operatorname{dim} \r[\tilde{x}]/ I^* \leq m_1+|J|$,
and the number of common zeros of polynomials $q_j$ is no more than $m_1+|J|$.
However, for $k=1,\dots,r$, we have
\[
q_j(u_k) \, = \, p_j(u_k)-
\sum\limits_{i \in [m_1] \cup J} s_{j,i}(u_k) \hat{a}_i(u_k)=0,
\]
which contradicts with $r>m\geq m_1+|J|$.
Hence, the equation \reff{atomee} holds.
Let $\epsilon>0$ be small enough such that
$\lambda_{k}\pm\epsilon\lambda_{k}^*>0$ $(k=1,\dots,r)$, and denote
\[
 \mu_1=(\lambda_{1}+\epsilon\lambda_{1}^*)\delta_{u_1}+\cdots+(\lambda_{r}+
 \epsilon\lambda_{r}^*)\delta_{u_r},~
\]
\[
\mu_2=(\lambda_{1}-\epsilon\lambda_{1}^*)\delta_{u_1}+\cdots+(\lambda_{r}-
\epsilon\lambda_{r}^*)\delta_{u_r}.~
\]
Let
\[
w_{1}=\int_{\widetilde{K}}[\tilde{x}]_{d_1} \mathrm{~d} \mu_{1}, \quad w_{2}=\int_{\widetilde{K}}[\tilde{x}]_{d_1} \mathrm{~d} \mu_{2}.
\]
Note that for $j\in \{m_1+1,\dots,m\}\backslash J$, we have
$ \langle \hat{a}_j,w\rangle  >  b_j$. Thus if $\epsilon$ is small enough,  we can easily check $w_1,w_2 \in E$,  $w^*=\frac{1}{2}\left(w_{1}+w_{2}\right)$, and  $w_1\neq w^*$, $w_2\neq w^*$,
which contradicts that $z$ is an extreme point of $ E$.

Second, consider the case that $\mu$ is not finitely atomic.
It can be reduced to the case of finitely atomic measures.
We refer to \cite[Lemma 3.5]{2014The}. Thus, every $\widetilde{K}$-representing measure
$\mu$ of $w^*$ must be $r$-atomic with $r \leq m$.
\end{proof}

For an order $k \geq \lceil d_1/2 \rceil$, the $k$th order moment relaxation of \reff{eq5.2} is
\be  \label{eq5.4}
\left\{ \baray{cl}
\min &  \langle f, w \rangle  \\
\st  &  \langle \hat{a}_i,w\rangle  =  b_i \, \, ( 1 \le  i \le m_1),  \\
&\langle \hat{a}_i,w\rangle   \geq  b_i \, \, (m_1 < i \le m),\\
&L_{\tilde{c}_{i}}^{(k)}[w] = 0~ (i\in \mathcal{E}), \\
& L_{\|\tilde{x}\|^2-1 }^{(k)}[w] = 0, \\
&L_{\tilde{c}_{j}}^{(k)}[w] \succeq 0~(j\in \mathcal{I}), \\
&  L_{x_0}^{(k)}[w] \succeq 0,  \\
& M_k[w] \succeq 0, \, w \in \mathbb{R}^{\mathbb{N}_{2 k}^{n+1}} .
\earay \right.
\ee
The dual optimization problem of $(\ref{eq5.4})$ is
\be \label{eq5.5}
\left\{ \baray{cl}
\max &  b_1 \theta_1 + \cdots + b_m \theta_m  \\
\st & f - \sum\limits_{i=1}^m \theta_i\hat{a}_i
\in  \ideal{\tilde{c}_{eq}}_{2k}+\qmod{\tilde{c}_{in}}_{2k},\\
& \theta_{m_1+1} \geq 0,\ldots, \theta_{m} \geq 0 .
\earay \right.
\end{equation}
In the above, $\tilde{c}_{eq}$ and $\tilde{c}_{in}$
are from \reff{cert}.
The following are some properties of the relaxations
\reff{eq5.4}-\reff{eq5.5}.

\begin{thm}
 Let $d_1 > d $ be even. Then we have:

\bit
	
\item[(i)] If the moment relaxation \reff{eq5.4}
is infeasible for some order $k$,
then the truncated moment system \reff{eq5.1} is  infeasible.
	
\item[(ii)] Suppose $K$ is closed at $\infty$ and
$H_0\cap int(\mathscr{P}_{d}(K)) \ne \emptyset$.
Then, the closure moment system  $(\ref{eqq5.6})$
is infeasible if and only if
the moment relaxation $\reff{eq5.4}$ is infeasible for some order $k$.
	
\item[(iii)]
Suppose  $K$ is closed at $\infty$ and $w^{*}$ is a minimizer of $(\ref{eq5.4})$ such that the truncation
$\left.w^{*}\right|_{2 t}$  $(2 t \geq d)$ is flat.
Then, the closure moment system \reff{eqq5.6} is feasible.
Moreover, if $\nu$ is the $\widetilde{K}$-representing measure
for $\left.w^{*}\right|_{2 t}$ and
\be \label{suppmu:x0>0}
\supp{\nu} \subseteq \{   x_0 > 0\},
\ee
then the moment system $(\ref{eq5.1})$ is also feasible.

\eit
\end{thm}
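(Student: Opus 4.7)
The plan is to treat the three items in order, since each builds on the homogenization machinery of Section~\ref{sc:geo}.

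For part (i), I use a direct pushforward construction. Given $y$ feasible for \reff{eq5.1} with $K$-representing measure $\mu$, define a measure $\nu$ on $\widetilde{K}$ as the pushforward of $\|(1,u)\|^{d}\mu$ under the map $u \mapsto (1,u)/\|(1,u)\|$. Using that $\tilde{x}_0^{d-|\af|}\tilde{x}^\af$ is homogeneous of degree $d$, one checks $\int \tilde{x}_0^{d-|\af|}\tilde{x}^\af \, d\nu = y_\af$ for each $\af \in \a$, and likewise $\int \hat{a}_i \, d\nu = \int a_i \, d\mu$. The tms $w := \int [\tilde{x}]_{2k}\, d\nu$ then satisfies $w|_{\tilde{\a}} = \tilde{y}$ and every localizing/semidefinite condition in \reff{eq5.4}, since $\nu$ is supported on $\widetilde{K}$. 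Contrapositively, infeasibility of \reff{eq5.4} at any order rules out feasibility of \reff{eq5.1}.

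For part (ii), the ``only if'' direction is immediate from (i). For ``if,'' I argue the contrapositive by building an infeasibility certificate. Suppose \reff{eqq5.6} is infeasible. A Farkas-type separation for the closed convex cone $cl(\mathscr{R}_\a(K))$ against the mixed equality/inequality constraints produces scalars $\theta_1,\ldots,\theta_m$ with $\theta_i \ge 0$ for $i>m_1$ such that $q := -\sum_i \theta_i a_i$ lies in $\mathscr{P}_\a(K)$ and $\sum_i \theta_i b_i > 0$. Closedness of $K$ at infinity (Lemma \ref{lem3.3}) then yields $\tilde{q} = -\sum_i \theta_i \hat{a}_i \ge 0$ on $\widetilde{K}$. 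Perturbing by $\epsilon \eta$ with $\eta \in H_0 \cap int(\mathscr{P}_d(K))$ preserves the sign pattern on the coefficients and keeps $\sum_i \theta'_i b_i > 0$ for $\epsilon$ small (since $\eta \in H_0$), while by Proposition \ref{lem3.9} the perturbed homogenized polynomial becomes strictly positive on $\widetilde{K}$. Archimedean Putinar (Theorem \ref{thm2.2}) then places $-\sum_i \theta'_i \hat{a}_i$ in $\ideal{\tilde{c}_{eq}}_{2k_0}+\qmod{\tilde{c}_{in}}_{2k_0}$ for some $k_0$. Pairing this against any purported $w$ feasible for \reff{eq5.4} at order $k_0$ gives the contradiction $0 \le \langle -\sum_i \theta'_i \hat{a}_i, w\rangle = -\sum_i \theta'_i b_i < 0$.

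For part (iii), I apply the Curto--Fialkow flat extension (Theorem \ref{thm1.1}) to the flat truncation $w^{*}|_{2t}$ to obtain a unique $\widetilde{K}$-representing measure $\nu = \sum_{k=1}^{r} \lambda_k \delta_{(\tau_k, v_k)}$ with $\lambda_k>0$. Since $2t \ge d = \deg(\a)$, the moments $\tilde{y} := w^{*}|_{\tilde{\a}}$ coincide with the $\tilde{\a}$-moments of $\nu$, so $\tilde{y} \in \mathscr{R}_{\tilde{\a}}(\widetilde{K})$ and Theorem \ref{thm3.10} gives $y \in cl(\mathscr{R}_\a(K))$; a direct coefficient comparison shows $\langle a_i, y\rangle = \langle \hat{a}_i, w^{*}\rangle$, matching the constraints of \reff{eqq5.6}. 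Under the extra hypothesis $\supp{\nu} \subseteq \{x_0>0\}$ all $\tau_k>0$, so set $u_k := v_k/\tau_k$; homogeneity of each $\tilde{c}_i,\tilde{c}_j$ together with $(\tau_k,v_k) \in \widetilde{K}$ gives $u_k \in K$. The measure $\mu := \sum_k \lambda_k \tau_k^d \delta_{u_k}$ then satisfies $\int x^\af \, d\mu = \sum_k \lambda_k \tau_k^{d-|\af|} v_k^\af = \tilde{y}_{(d-|\af|,\af)} = y_\af$, so $y \in \mathscr{R}_\a(K)$ and \reff{eq5.1} is feasible.

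The main obstacle is the Farkas step in part (ii): extracting the sign-constrained certificate $(\theta_i, q)$ from infeasibility of \reff{eqq5.6} requires checking the closedness of the relevant image cone, or using a small perturbation to handle the borderline case. The hypothesis $H_0 \cap int(\mathscr{P}_d(K)) \neq \emptyset$ is what lets one upgrade a nonnegative certificate on $\widetilde{K}$ to a strictly positive one, so that archimedean Putinar delivers a finite-order representation; hence this hypothesis is essential rather than decorative. Parts (i) and (iii), by contrast, are essentially computational once the homogenization dictionary (Theorems \ref{thm3.10}, \ref{thm1.1}, Proposition \ref{lem3.9}) is in place.
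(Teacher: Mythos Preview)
Your overall strategy matches the paper's, and parts (i) and (iii) are correct and essentially identical to the paper's argument. In part (ii), however, there is a genuine gap in the ``easy'' direction, plus two smaller slips.

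First, your if/only-if labels are swapped: the direction you argue in detail (assume \reff{eqq5.6} infeasible, build a certificate, conclude \reff{eq5.4} infeasible) is the \emph{only if} direction, not the \emph{if} direction.

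More importantly, the direction you dismiss as ``immediate from (i)'' does \emph{not} follow from (i). Item (i) says that infeasibility of \reff{eq5.4} forces infeasibility of \reff{eq5.1}. But \reff{eq5.1} infeasible does not imply \reff{eqq5.6} infeasible, since the closure system can be feasible while the original is not (this is exactly the boundary phenomenon discussed around Remark~\ref{approa}). To get ``\reff{eq5.4} infeasible $\Rightarrow$ \reff{eqq5.6} infeasible'' you must argue the contrapositive: if $y$ is feasible for \reff{eqq5.6}, then $y\in cl(\mathscr{R}_\a(K))$, so by Theorem~\ref{thm3.10} we have $\tilde{y}\in\mathscr{R}_{\tilde{\a}}(\widetilde{K})$, and any $\widetilde{K}$-representing measure for $\tilde{y}$ produces a $w$ feasible for \reff{eq5.4} at every order. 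This is precisely how the paper handles it, and it genuinely requires the closedness-at-$\infty$ hypothesis through Theorem~\ref{thm3.10}; your pushforward from (i) only lands in $\widetilde{K}\cap\{x_0>0\}$ and cannot reach boundary points of $cl(\mathscr{R}_\a(K))$.

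Finally, in your contradiction step you write $\langle -\sum_i\theta'_i\hat{a}_i, w\rangle = -\sum_i\theta'_i b_i$. This should be an inequality: for $i>m_1$ one only has $\langle\hat{a}_i,w\rangle\ge b_i$, so with $\theta'_i\ge 0$ you get $\langle -\sum_i\theta'_i\hat{a}_i,w\rangle\le -\sum_i\theta'_i b_i<0$, which is still the desired contradiction. The paper sidesteps this by phrasing the Farkas step as unboundedness of the dual \reff{eq5.8}, which amounts to the same thing.
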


\begin{proof}
(i) This conclusion is obvious, because
\reff{eq5.4} is a relaxation of \reff{eq5.2}.
If  \reff{eq5.1} is feasible,
then \reff{eq5.2} and \reff{eq5.4} are also feasible.

(ii)
The ``if" part follows from Theorem \ref{thm3.10}, so we only need to show the ``only if" part.
Suppose the system \reff{eqq5.6} is infeasible, i.e.,
the following optimization
\be  \label{eq5.7}
\left\{ \baray{cl}
\max  & 0  \\
\st & \langle \hat{a}_i,w\rangle   =  b_i \, \, ( 1 \le  i \le m_1), \\
&\langle \hat{a}_i,w\rangle   \geq  b_i \, \, ( m_1 < i \le m),\\
&  w \in \mathscr{R}_{\a}(\widetilde{K})
\earay \right.
\ee
is infeasible. The dual problem of $(\ref{eq5.7})$ is
\be \label{eq5.8}
\left\{ \baray{rl}
\min &  \sum\limits_{ i \in [m_1] } b_i t_i -
           \sum\limits_{m_1 < j \leq m}   b_j t_j  \\
\st &  \sum\limits_{ i \in [m_1] }  t_i\hat{a}_i-
        \sum\limits_{m_1 < j \leq m}   t_j\hat{a}_j
   \in \mathscr{P}_{\a}(\widetilde{K}),  \\
   &  t_{m_1+1} \geq 0, \ldots, t_m \ge 0 .
\earay \right.
\ee
Since $H_0\cap int(\mathscr{P}_{d}(K)) \ne \emptyset$,
we know \reff{eq5.8} has an interior point.
By the strong duality, the optimization \reff{eq5.8}
must be unbounded below, i.e.,
there exists  $\lmd_1^*,\cdots,\lmd_{m_1}^*$, $\lmd_{m_1+1}^*\geq0, \ldots, \lmd_m^*\geq0$ such that
\begin{align*}
 b(\lmd^*) \coloneqq &
  \sum\limits_{ i \in [m_1] } b_i \lmd_i^*-
  \sum\limits_{m_1 < j \leq m}  b_j \lmd_j^* <0, \\
 \hat{a}(\lmd^*) \coloneqq &
  \sum\limits_{ i \in [m_1] } \lmd_i^* \hat{a}_i-
  \sum\limits_{m_1 < j \leq m} \lmd_j^* \hat{a}_j
 \in \mathscr{P}_{\a}(\widetilde{K}).
\end{align*}
Since $H_0\cap int(\mathscr{P}_{d}(K)) \ne \emptyset$,
let $\bar{\lmd}$ be such that $\hat{a}(\bar{\lmd}) > 0$ on $\widetilde{K}$.
Then for $\eps > 0$ small enough, $\hat{a}(\eps \bar{\lmd} + \lmd^* ) > 0$ on $\widetilde{K}$.
By Putinar's Positivstellensatz (see Theorem~\ref{thm2.2}), the following holds
\[
\hat{a}(\eps \bar{\lmd} + \lmd^* )   \in
\ideal{\tilde{c}_{eq}}_{2k_1}+\qmod{\tilde{c}_{in}}_{2k_1},
\quad
b(\eps \bar{\lmd} + \lmd^* )<0,
\]
for some $k_1$ and $\epsilon>0$ small enough.
This means that \reff{eq5.5} has an improving ray,
so \reff{eq5.4} must be infeasible.

(iii) Since the truncation $\left.w^{*}\right|_{2 t}$  is flat,
it admits a unique $\widetilde{K}$-representing measure $\nu$.
Moreover, the measure $\nu$ is finitely atomic (see Theorem~\ref{thm1.1}), say,
\[
\nu = \tilde{\lambda}_{1} \delta_{(\tau_1, v_1)} + \cdots +
\tilde{\lambda}_{r} \delta_{(\tau_r, v_r)},
\]
with each point $(\tau_i, v_i) \in \widetilde{K}$.
Hence, the dehomogenized tms of $\left.w^{*}\right|_{\tilde{\a}}$ is a feasible solution of the closure moment system \reff{eqq5.6}.
Moreover, when the assumption \reff{suppmu:x0>0} holds,
we have $\tau_i > 0$ for every $i$.
Let $u_i = v_i/\tau_i$, then $u_i \in K$. So the measure
\[
\mu =  \lmd_{1} \delta_{u_1} + \cdots + \lmd_{r} \delta_{u_r},
\]
where $\lmd_i = \tilde{\lambda}_{1} \tau_i^d$, is a $K$-representing measure
for the dehomogenized tms of $\left.w^{*}\right|_{2 t}$.
This implies that the system $(\ref{eq5.1})$ is feasible.
\end{proof}

In view of the above properties,
we get the following algorithm.

\begin{algorithm}\label{alg2}
(A Moment-SOS algorithm for solving the GTMP)
For the semialgebraic set $K$ as in \reff{1.1},
let $\widetilde{K}$ be as in $(\ref{nn})$ and let
$d_K$ be as in \reff{deg:dK}.

\bit

\item [Input:]  The polynomials $a_1, \ldots, a_{m_1}, \ldots, a_m$
and constants $b_1, \ldots, b_{m_1}, \ldots, b_m$,
and a finite power set $\a \subseteq \n^n$.

\item [Step~0]
Let $d = \deg(\A)$ and let $\hat{a}_i = x_0^d a_i(x/x_0)$.
Choose an even integer $d_1>d$  and a generic $f  \in \Sigma[\tilde{x}]_{d_1}$.
Let $k \coloneqq \frac{d_1}{2}$.

\item [Step~2]  Solve the Moment-SOS relaxation $\reff{eq5.4}-\reff{eq5.5}$.
If $\reff{eq5.4}$ is infeasible, output that
\reff{eq5.1} is infeasible.
If $\reff{eq5.4}$ is feasible, solve it for a minimizer $w^{*}$.
Let $t = \max \{d_K, \lceil d/2 \rceil \}$.

\item [Step~3]
Check whether the rank condition 
\be \label{flat:condi}
\rank M_{t-d_K}[w^{*}|_{2t}] \, = \, \rank M_{t}[w^{ *}|_{2t}].
\ee
is satisfied or not. If yes, compute the finitely atomic measure $\nu$ for $w^{ *}|_{2t}$:
\[
\nu= \rho_1 \delta_{\tilde{u}_{1}}+\cdots+ \rho_r \delta_{\tilde{u}_{r}},
\]
where $r=\operatorname{rank} M_{t}\left(\left.w^{*}\right|_{2 t}\right),$
$\tilde{u}_i = (\tau_i, v_i) \in \widetilde{K}$  and $\rho_i>0$.
If all $\tau_i > 0$,  let
$\mu  \coloneqq  \lambda_{1} \delta_{u_1}+\cdots+\lambda_{r} \delta_{u_r}$,
where each $\lambda_i = \rho_i \tau_i^{d}$, $u_i=v_i/\tau_i$,  and stop. Otherwise, go to Step 4.

\item [Step~4]  If $t < k$, let $t \coloneqq t+1$ and go to Step~3.
Otherwise, let $k \coloneqq k+1$ and go to Step 2. 

\item [Output]
The finitely atomic $K$-representing measure $\mu$ for $y$ satisfying \reff{eq5.1},
or a detection that \reff{eq5.1} is infeasible.

\eit

\end{algorithm}

\begin{remark}
There are the cases that $w^{ *}|_{2t}$ is flat while its representing measure
$\nu$ has an  atom $\tilde{u}_{l}=(\tau_l, v_l)$
with the $x_0$-coordinate $\tau_l = 0$. For such a case,
we cannot obtain
a $K$-representing measure satisfying \reff{eq5.1},
although \reff{eqq5.6} is feasible.
When this happens, it is possible that \reff{eq5.2}
has other feasible points that have a finitely atomic
$\widetilde{K}$-representing measure
whose support has positive $x_0$-coordinates.
There are two ways to get a feasible point for \reff{eq5.1}:
\bit
\item[(i)]
Choose a different generic $f$ to get a new feasible point of \reff{eq5.2}.

\item[(ii)] Strength the constraint $x_0\geq0$ in $\widetilde{K}$
to $x_0\geq \epsilon$ for some $\epsilon >0$ small.

\eit
However, no matter if  a finitely atomic $\widetilde{K}$-representing measure
whose support has positive $x_0$-coordinates can be obtained,
we can always approximately get a feasible point of \reff{eq5.1}
by a sequence of finitely atomic $K$-representing measures.
We refer to Remark~\ref{approa} for how to do this.
\end{remark}

The following is the convergence result for Algorithm~\ref{alg2}.

\begin{thm}
Let $d_1 > d $ be even. Suppose the moment system \reff{eq5.1} is feasible and $f$
is generic in $\Sigma[\tilde{x}]_{d_1}$.
Then, the moment relaxation \reff{eq5.4} has an optimizer $w^k$
for every order $k$.
Moreover, we also have:
\bit

\item[(i)] For all t big enough, the sequence $\left\{w^{k}|_{2 t}\right\}$
is bounded and all its accumulation points are flat.
Moreover, each of its accumulation points
admits a $r$-atomic $K$-representing measure with $r \leq m$.
	
\item[(ii)]
Assume that  $(\ref{eq5.3})$ has a maximizer $\theta^{*}$ such that
\be \label{finite}
f - \sum\limits_{i = 1}^m \theta_i^*\hat{a}_i  \in
\ideal{\tilde{c}_{eq}}_{2k_1}+\qmod{\tilde{c}_{in}}_{2k_1}
\ee
and the optimization problem
\be \label{lo:finite}
\left\{ \baray{rl}
\min &  f(\tilde{x})- \sum\limits_{i = 1}^m  \theta_i^*\hat{a}_i(\tilde{x})  \\
\st &   \tilde{x} \in \widetilde{K}
\earay \right.
\ee
has finitely many KKT points. Then, there exists $t>0$ such that
the truncation $\left.w^{ k}\right|_{2 t}$ is flat for all $k$ big enough.

\eit
\end{thm}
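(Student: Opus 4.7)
The plan is to establish existence of optimizers first, then analyze the asymptotic behavior of $\{w^k\}$ via the convergence theorem for the cone approximations $\mathscr{R}^{(k)}(K)$, and finally handle finite convergence in (ii) separately using the dual certificate together with the finite KKT hypothesis.

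For the existence of $w^k$, I would argue that the localizing constraint $L_{\|\tilde{x}\|^{2}-1}^{(k)}[w]=0$ combined with $M_{k}[w]\succeq 0$ forces all entries of $w$ to satisfy $|w_{\beta}|\le w_{0}$, so the feasible set of \reff{eq5.4} is closed and bounded. Feasibility follows from \reff{eq5.1}: any $K$-representing measure $\mu$ for a solution of \reff{eq5.1} has a natural compactified lift to $\widetilde{K}$ (send each atom $u \in K$ to $(1,u)/\|(1,u)\| \in \widetilde{K}$ and rescale) producing a feasible $w$. Hence the minimum of the linear objective $\langle f,w\rangle$ is attained.

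For part (i), boundedness of $\{w^{k}|_{2t}\}$ is immediate from the uniform bound above. Taking any convergent subsequence and applying a diagonal argument over $t$, I obtain a full sequence $w^{*}\in\mathbb{R}^{\mathbb{N}^{n+1}}$ lying in every $\mathscr{R}^{(k)}(K)$. By \reff{cvg:Ra(K)} and Theorem \ref{thm3.10}, the restriction $w^{*}|_{\tilde{\a}}$ dehomogenizes to a point of $cl(\mathscr{R}_{\a}(K))=\mathscr{R}_{\tilde{\a}}(\widetilde{K})$, so $w^{*}|_{d_1}$ is feasible for \reff{eq5.2}. Since $\langle f,w^{k}\rangle$ is a lower bound for the optimum of \reff{eq5.2} and $\langle f,w^{*}\rangle\le\liminf\langle f,w^{k}\rangle$, this accumulation point achieves the minimum. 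By Proposition \ref{pro1}(ii) the minimizer is unique and its unique $\widetilde{K}$-representing measure $\nu$ is $r$-atomic with $r\le m$. For $t\ge\max\{d_K,\lceil d/2\rceil\}$ large enough, both $M_{t-d_K}[w^{*}|_{2t}]$ and $M_{t}[w^{*}|_{2t}]$ have rank equal to $r$, which is exactly the flatness condition \reff{flat:condi}; dehomogenizing and rescaling the atoms with positive $x_0$-coordinate yields the required $r$-atomic $K$-representing measure (with possibly an approximation step for atoms on $\{x_0=0\}$, as in Remark \ref{approa}).

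For part (ii), the identity \reff{finite} says that $(w^{k},\theta^{*})$ is a primal-dual feasible pair at order $k\ge k_1$ achieving the same objective value, so by complementary slackness $\mathscr{L}_{w^{k}}\!\left(f-\sum_i\theta_i^{*}\hat{a}_i\right)=0$ and the theta-constraints are active appropriately. Because $f-\sum_i\theta_i^{*}\hat{a}_i$ is a nonnegative polynomial on $\widetilde{K}$ whose zero set is contained in the finite KKT set of \reff{lo:finite}, every $\widetilde{K}$-representing measure of an accumulation point of $\{w^{k}\}$ must be supported on this finite set. Invoking the standard finite-convergence argument (e.g.\ the line of reasoning in Nie's work on Lasserre's hierarchy under optimality conditions), the support's finiteness and the explicit degree bound in \reff{finite} translate into a uniform rank bound for $M_t[w^{k}|_{2t}]$ at some $t$ independent of $k$, which yields \reff{flat:condi} for all large $k$.

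The main technical obstacle is establishing the uniform flat truncation order $t$ in part (ii): one needs to convert the finite-KKT plus degree-$k_1$ certificate into a single moment-matrix rank stabilization that holds simultaneously for every sufficiently large $k$, rather than just in the limit. This requires combining the duality gap equation $\mathscr{L}_{w^{k}}(f-\sum_i\theta_i^{*}\hat{a}_i)=0$ with Putinar-type ideal membership arguments applied to the vanishing ideal of the KKT locus inside $\widetilde{K}$, which is the most delicate step of the proof.
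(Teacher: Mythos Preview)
Your overall plan matches the paper's closely, and parts (i) and (ii) both ultimately appeal to the same external results the paper cites (the $\mathcal{A}$-TKMP paper \cite{2014The} for asymptotic flatness in (i), and \cite{Nie2015Linear} for finite convergence in (ii)). However, there is a concrete error in your existence argument that also undermines your boundedness claim in (i).

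You assert that the constraint $L^{(k)}_{\|\tilde{x}\|^{2}-1}[w]=0$ together with $M_{k}[w]\succeq 0$ gives $|w_{\beta}|\le w_{0}$, and then conclude that the feasible set of \reff{eq5.4} is bounded. The entrywise bound $|w_{\beta}|\le w_{0}$ is correct, but it does \emph{not} imply boundedness of the feasible set: nothing in the affine constraints $\langle\hat{a}_{i},w\rangle=b_{i}$ forces $w_{0}$ itself to be bounded. For instance, with $n=1$, $K=\re$, and the single constraint $\langle x_{1}^{2},w\rangle=1$, the sphere relation forces $w_{(0,2)}=1$ while $w_{0}=w_{(0,0)}=w_{(2,0)}+1$ can be arbitrarily large. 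Consequently your direct compactness argument for attainment of $w^{k}$ fails, and so does the claim that ``boundedness of $\{w^{k}|_{2t}\}$ is immediate from the uniform bound above'', since you never bounded $(w^{k})_{0}$ uniformly in $k$.

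The paper's fix, which you are missing, is to use genericity of $f$ in $\Sigma[\tilde{x}]_{d_{1}}$ in an essential way. Taking $f\in\operatorname{int}(\Sigma[\tilde{x}]_{d_{1}})$, one has $f-\varepsilon\in\Sigma[\tilde{x}]_{d_{1}}$ for some $\varepsilon>0$, hence $\langle f-\varepsilon,w^{k}\rangle\ge 0$ and therefore
\[
(w^{k})_{0}\ \le\ \langle f,w^{k}\rangle/\varepsilon\ \le\ \mathrm{OPT}\reff{eq5.2}/\varepsilon,
\]
which, combined with your correct bound $|w^{k}_{\beta}|\le (w^{k})_{0}$, gives the uniform boundedness of $\{w^{k}|_{2t}\}$. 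For the existence of $w^{k}$ itself, the paper does not argue compactness of the feasible set at all; instead it observes that $\theta=0$ is a strictly feasible point of the dual \reff{eq5.5} (again because $f\in\operatorname{int}(\Sigma[\tilde{x}]_{d_{1}})$) and invokes conic strong duality to conclude that the primal \reff{eq5.4} attains its optimum. Once you insert these two uses of $f\in\operatorname{int}(\Sigma[\tilde{x}]_{d_{1}})$, the remainder of your sketch coincides with the paper's argument.
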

\begin{proof}
	
 Since $f$ is generic in $\Sigma[\tilde{x}]_{d_1}$, we can assume
$f \in int(\Sigma[\tilde{x}]_{d_1})$. Hence, we know   $(\ref{eq5.5})$ has an interior point,
so the optimal values of $(\ref{eq5.4})$ and $(\ref{eq5.5})$ are equal,
and $(\ref{eq5.4})$ has an optimal minimizer $w^k$,
by the strong duality theorem (see \cite{Bert}).

(i) First, we show the  sequence $\left\{w^{k}|_{2 t}\right\}_{k=t}^\infty$
is bounded. Note that $f \in int(\Sigma[\tilde{x}]_{d_1})$ and $M_k[w^k] \succeq 0$. We have that $f-\epsilon \in\Sigma[\tilde{x}]_{d_1}$ for $\epsilon>0$ small enough and $\langle f-\epsilon,w^k\rangle\geq 0$. Let $OPT(f)$ be the optimal value of \reff{eq5.2}.  Since $(\ref{eq5.4})$ is a relaxation of \reff{eq5.2}, we have
\be \label{w:bound}
(w^k)_0\leq \langle f,w^k\rangle/\epsilon \leq OPT(f)/\epsilon.
\ee
Hence, the sequence $\{(w^k)_0\}$ is bounded. Moreover, the constraint $L_{\|\tilde{x}\|^2-1 }^{(k)}[w^k] = 0$ gives that
\[
\langle \|\tilde{x}\|^{2t} ,w^k\rangle=\langle \|\tilde{x}\|^{2(t-1)} ,w^k\rangle=\cdots=\langle 1 ,w^k\rangle=(w^k)_0,
\]
which implies that  the diagonal elements
of $M_t[w^{k}|_{2 t}]$ are all bounded by  $(w^k)_0$. Combining with \reff{w:bound}, it implies that $\left\{w^{k}|_{2 t}\right\}_{k=t}^\infty$
is bounded. Suppose $w^*$ is an accumulation
point of $\left\{w^{k}|_{2 t}\right\}_{k=t}^\infty$ for $t\geq \frac{d_1}{2}$.  It is easy to see that $w^*$ is a minimizer of \reff{eq5.2}. When $t$ is sufficiently big, the flatness of $w^*$ can be proved as in  \cite[Theorem~5.3]{2014The}. We omit it for cleanness. Note that for generic $f$, the optimization problem $(\ref{eq5.2})$ has a unique minimizer. Hence, the truncation of each  accumulation point at degree $d$ must be an extreme point of \reff{extre}. Combining with Proposition~\ref{pro1},
we have that each of its accumulation points admits
a $r$-atomic $K$-representing measure with $r \leq m$.

(ii) Since $f$ is generic in $\Sigma[\tilde{x}]_{d_1}$,  the optimal values of $(\ref{eq5.2})$ and $(\ref{eq5.3})$ are equal. The condition \reff{finite} implies that the relaxation $\reff{eq5.4}-\reff{eq5.5}$ is tight for all $k\geq k_1$. For  $k\geq k_1$, the following holds
\[
\langle f- \sum\limits_{i = 1}^m  \theta_i^*\hat{a}_i,w^k\rangle=\langle f, w^k\rangle-\sum\limits_{i = 1}^m\theta_i^*\langle    \hat{a}_i,w^k\rangle \leq \langle f, w^k\rangle-\sum\limits_{i=1}^m b_i\theta_i^*=0.
\]
Note that $f - \sum\limits_{i=1}^m \theta_i^*\hat{a}_i
\in \mathscr{P}_{d_1}(\widetilde{K})$. It implies that the optimal value of \reff{lo:finite} is zero.
The $\ell$th order SOS relaxation for \reff{lo:finite} is
\be  \label{finite:sos}
\left\{ \baray{cl}
\max  & \gamma  \\
\st &f - \sum\limits_{i = 1}^m \theta_i^*\hat{a}_i -\gamma  \in
\ideal{\tilde{c}_{eq}}_{2\ell}+\qmod{\tilde{c}_{in}}_{2\ell}. \\
\earay \right.
\ee
The above implies that the relaxation \reff{finite:sos} is tight for all $\ell\geq k_1$. Hence, the assumptions in \cite{Nie2015Linear} for the problem  \reff{lo:finite} are satisfied and the truncation $\left.w^{ k}\right|_{2 t}$ is flat for all $t$, $k$ big enough.
\end{proof}

\section{Numerical Experiments}
\label{sc:num}

In this section, we give examples to
 solve truncated moment problems
with unbounded sets by Algorithm \ref{alg2}. In our computation, we set
\[
d_0 =  \big\lceil \frac{d+1}{2} \big\rceil, \quad
f = [x]_{d_0 }^{T} R^{T} R[x]_{d_0},
\]
where $R$ is a random square matrix obeying standard Gaussian distribution.
Algorithm \ref{alg2} is implemented in the software
GloptiPoly~3 \cite{2009GloptiPoly}
by calling the semidefinite program package Sedumi \cite{sturmusing}.
The computation is implemented in MATLAB R2019a on a Lenovo
Laptop with CPU@1.10GHz and RAM 16.0G.
For cleanness of the paper, only four digits are displayed
for computational results.

Recall that  a symmetric tensor
$\mathcal{B}=\left(\mathcal{B}_{i_{1} \ldots i_{m}}
\right)_{0 \leq i_{1}, \ldots, i_{m} \leq n}$
is uniquely determined by the tms
$\mathbf{b} \coloneqq (b_\af)_{\af \in \n^n_m}$ such that
\[
b_{\alpha} \, =  \, \mathcal{B}_{i_{1} i_{2} \ldots i_{m}}
\]
for every $\af \in \N^n_m$ with
$x_0^{m-|\alpha|}x^\alpha = x_{i_{1}}  \cdots x_{i_{m}}$.

First, we give some examples of PSOP tensor decompositions.

\begin{exm}
(i) Let $\mathcal{B} \in \mathrm{S}^{3}\left(\mathbb{R}^{5}\right)$
be the symmetric tensor such that
\[
  	\mathcal{B}_{i j k}=\left\{\begin{array}{l}
  		1 \text { if }  ijk=0, \\
  		3 \text { otherwise. }
  	\end{array}\right.
\]
We can see that $\mathcal{B} \in cl(\mathrm{PS}^{5}_3)\backslash\mathrm{PS}^{5}_3$,
i.e., the tms $\mathbf{b} \in cl(\mathscr{R}_{3}(\r^4))\backslash \mathscr{R}_{3}(\r^4)$. For each $\eps> 0$, let
\[
\mathbf{b}^{\epsilon}\, \coloneqq \,
2\epsilon^3[(\frac{1}{\epsilon},\frac{1}{\epsilon},\frac{1}{\epsilon},\frac{1}{\epsilon})]_{\n^4_3}
+[(1,1,1,1)]_{\n^4_3}.
\]
The membership $\mathbf{b} \in cl(\mathscr{R}_{3}(\r^4))$
follows from $\mathbf{b}^{\epsilon} \rightarrow \mathbf{b}$ as $\eps \to 0$.
To see  $\mathbf{b} \notin  \mathscr{R}_{3}(\r^4)$,
suppose otherwise that $\mathbf{b}$ admits a representing measure $\mu$. Then,
\[
\int (1-x_1)^2 d \mu=b_{(0,0,0,0)}-2b_{(1,0,0,0)}+b_{(2,0,0,0)}=0.
\]
This implies that $supp(\mu) \subseteq \{x_1=1\}$, so
\[
0=\int x_1(1-x_1)^2 d \mu=b_{(1,0,0,0)}-2b_{(2,0,0,0)}+b_{(3,0,0,0)}=2,
\]
which is a contradiction.
We apply the Algorithm \ref{alg2} to check $\mathbf{b} \in cl(\mathscr{R}_{3}(\r^4))$.
For each random instance of $f$, we get
$\operatorname{rank} M_{1}(w^k) = \operatorname{rank} M_{2}(w^k)=2$
at the relaxation order $k=2$ and obtain a $2$-atomic representing measure
whose support consists of the points
\begin{equation*}
( 0, \frac{1}{2},\frac{1}{2},\frac{1}{2},\frac{1}{2}), \quad
( \frac{1}{\sqrt{5}},  \frac{1}{\sqrt{5}},  \frac{1}{\sqrt{5}}, \frac{1}{\sqrt{5}},\frac{1}{\sqrt{5}}).
\end{equation*}
Note that $x_0$-coordinate of the atom $( 0, \frac{1}{2},\frac{1}{2},\frac{1}{2},\frac{1}{2})$ is  zero.
This  implies that $\mathcal{B} \in cl(\mathrm{PS}^{5}_3)$ by Remark~\ref{approa}.
The computation took about  $0.48$ second.

\noindent
(ii) Let $\mathcal{B} \in \mathrm{S}^{4}\left(\mathbb{R}^{6}\right)$
be the symmetric tensor such that
\[
\mathcal{B}_{i j k s}=i+k+j+s.
\]
By applying  Algorithm \ref{alg2}, the relaxation~\reff{eq5.4}
is infeasible at the order $k=3$.
Therefore, we get $\mathcal{B} \notin cl(\mathrm{PS}^{6}_4)$
and $\mathcal{B}$ is not PSOP. The computation took about $48.12$ seconds.

\noindent
(iii) Let  $\mathcal{B} \in \mathrm{S}^{3}\left(\mathbb{R}^{n+1}\right)$
be the symmetric tensor such that
\[
	\mathcal{B}_{i j k}=n-\max\{i,j,k\}.
\]
For the case $n=1,2,\dots,9$,  Algorithm~\ref{alg2} always produces
a PSOP decomposition for $\mathcal{B}$ at the order $k=2$.  For instance, when $n=5$, Algorithm~\ref{alg2} produces the following PSOP decomposition
\[
\mathcal{B}=
\bbm 1\\0\\0\\0\\0\\0 \ebm^{\otimes 3}+
\bbm 1\\1\\0\\0\\0\\0 \ebm^{\otimes 3}+
\bbm 1\\1\\1\\0\\0\\0 \ebm^{\otimes 3}+
\bbm 1\\1\\1\\1\\0\\0 \ebm^{\otimes 3}+
\bbm 1\\1\\1\\1\\1\\0 \ebm^{\otimes 3}.
\]
The computing time is presented in Table \ref{time1}.
Time consumption in the table is measured in seconds.
The computational time increases as the dimension increases.
This is because the matrix size of the relaxation~\reff{eq5.4}
for the order $k$ is around $O(n^k)$.

\begin{table}
	\label{time1}
\caption{Example 5.1 (iii)}
\begin{tabular}{l|c|c|c|c|c}
	\hline n & time & n & time& n & time \\
	\hline 1 & 0.08 &2 &  0.11 &3 &  0.21\\
	\hline 4&   0.43&5 &  0.85 &6 &  1.93 \\
	\hline 7& 4.48 &8 &   10.81&9 &25.95 \\
\hline
\end{tabular}
\end{table}

\noindent
(iv) Consider the tensor $\mathcal{B} \in \mathrm{S}^{6}\left(\mathbb{R}^{3}\right)$
such that $\mathbf{b} \coloneqq(b_\beta)_{\beta\leq \n^{2}_{6}}$
is (listed in degree-lexicographic order)
\[
\begin{array}{c}
\big( 1,1,0,1,0,1,1,0,1, c, 1,0,1, c, 1+c^{2}, 1,0,1, c, 1+c^{2}, 2 c+c^{3}, \\
    1,0,1, c, 1+c^{2}, 2 c+c^{3}, 1+3 c^{2}+c^{4}+t \big),
\end{array}
\]
for two parameters $c,t \in \r$.
This example is a variation of Example~5.2 of \cite{fialkow2013closure}.
As pointed in \cite{fialkow2013closure}, the tms $\mathbf{b}$
is flat and admits a representing measure when $t=0$
and $\mathbf{b} \in cl(\mathscr{R}_{6}(\r^2))\backslash \mathscr{R}_{6}(\r^2)$ when $t>0$.
For the case $t=c=0$, Algorithm~\ref{alg2} produces the PSOP decomposition
\[
\mathcal{B} \, =  \,
\frac{1}{2} \left[\baray{r} 1 \\ 1 \\ -1 \earay\right]^{\otimes 6}
+ \frac{1}{2} \left[\baray{r} 1 \\ 1 \\ 1 \earay\right]^{\otimes 6}
\]
at the relaxation order $k=4$. It took around $0.97$ second.
Next we consider the case $t=1$, $c=0$. For each random instance of $f$,
we  get $\operatorname{rank} M_{1}(w^k)=\operatorname{rank} M_{2}(w^k)=3$
at the order $k=4$ and it produces a $3$-atomic measure $\mu$
for the homogenization $\widetilde{\mathbf{b}}$.
For some random instances of $f$, the support $\supp{\mu}$ consists of the points
\begin{equation*}
( \frac{1}{\sqrt{3}}, \frac{1}{\sqrt{3}},- \frac{1}{\sqrt{3}}), \quad
( \frac{1}{\sqrt{3}},\frac{1}{\sqrt{3}}, \frac{1}{\sqrt{3}}),
\quad  (0,0,-1).
\end{equation*}
For other instances of $f$, the support $\supp{\mu}$ consists of the points
\begin{equation*}
( \frac{1}{\sqrt{3}}, \frac{1}{\sqrt{3}},- \frac{1}{\sqrt{3}}), \quad
( \frac{1}{\sqrt{3}},\frac{1}{\sqrt{3}}, \frac{1}{\sqrt{3}}),
\quad  (0,0,1).
\end{equation*}
Since $x_0$-coordinates of $(0,0,1)$ and $(0,0,-1)$ are zero,
we get $\mathcal{B} \in cl(\mathrm{PS}^{3}_6)$.
The computation took about $0.96$ second.
\end{exm}

In the following, 	we give some examples of SCP tensor decompositions.
Recall that the tensor $\mathcal{B}$ is SCP
if and only if $\mathbf{b}$ admits a representing measure supported in
the nonnegative orthant.

\begin{exm}
Consider the symmetric tensor $\mathcal{B}(t) \in \mathrm{S}^{3}\left(\mathbb{R}^{6}\right)$
such that
\[
	\mathcal{B}(t) \, =  \,
\left[\baray{r} 1 \\ 1 \\ 0 \\0\\0 \\0\\ \earay\right]^{\otimes 3}
	+  \left[\baray{r} 1 \\ 0 \\ 1\\0\\0\\0\\ \earay\right]^{\otimes 3}+
\left[\baray{r} 1 \\ 0 \\ 0\\1\\0\\0\\ \earay\right]^{\otimes 3}+
\left[\baray{r} 1 \\ 0 \\ 0\\0\\1\\0\\ \earay\right]^{\otimes 3}+
\left[\baray{r} t \\ 0 \\ 0\\0\\0\\1\\ \earay\right]^{\otimes 3}
\]
for a parameter $t\in \r$.
For $t>0$, we clearly have $\mathcal{B}(t) \in \mathrm{SCP}_3^6$.
However, for $t=0$, we have
$\mathcal{B}(0) \in cl(\mathrm{SCP}_3^6) \backslash \mathrm{SCP}_3^6$.
To see this, note that
\[
\mathcal{B}(t)_{555} = 1, \quad
\mathcal{B}(t)_{055} = t.
\]
If $\mathcal{B}(0)$ was otherwise SCP,
then there would exist a Borel measure $\mu$
such that $\supp{\mu} \subseteq \re^6$ and
\[
\int x_5^3\mathrm{d}\mu = \mathcal{B}(0)_{555} = 1,
\int x_5^2\mathrm{d}\mu = \mathcal{B}(0)_{055} = 0.
\]
But such a Borel measure $\mu$  cannot exist, so we get
$\mathcal{B}(0) \not\in \mathrm{SCP}_3^6$. Moreover,
since $\mathcal{B}(t) \to \mathcal{B}(0)$ as $t \to 0$ and $t > 0$,
we get $\mathcal{B}(0) \in cl(\mathrm{SCP}_3^6)$. Similarly,
we can show that $\mathcal{B}(t) \notin \mathrm{SCP}_3^6$ for $t<0$.
The following is the performance of Algorithm \ref{alg2}
for checking SCP tensors.
	
(i) Consider the case $t=1$. By  applying Algorithm \ref{alg2}, it produces the decomposition
$
\mathcal{B}(1) = \sum_{k=1}^{5} \rho_{k} \tilde{u}_k^{\otimes 3}
$
at the relaxation order $k=2$, where
\[
\baray{ll}
\tilde{u}_1=\frac{1}{\sqrt{2}}(1,1,0,0,0,0),&\rho_{1}= 2\sqrt{2};\\
\tilde{u}_2=\frac{1}{\sqrt{2}}(1,0,1,0,0,0),&\rho_{2}= 2\sqrt{2};\\
\tilde{u}_3=\frac{1}{\sqrt{2}}(1,0,0,1,0,0),&\rho_{3}= 2\sqrt{2};\\
\tilde{u}_4=\frac{1}{\sqrt{2}}(1,0,0,0,1,0),&\rho_{4}= 2\sqrt{2};\\
\tilde{u}_5=\frac{1}{\sqrt{2}}(1,0,0,0,0,1),&\rho_{5}= 2\sqrt{2}.
\earay
\]
It took about $ 0.97$ second.
Since all the $x_0$-coordinates of $\tilde{u}_k$ are positive,
we know that $\mathcal{B}(1)$ is an SCP tensor.

(ii) Consider the case $t=0$.  By  applying Algorithm \ref{alg2},
it produces the decomposition $\mathcal{B}(0) = \sum_{k=1}^{5}\rho_{k}
\tilde{u}_k ^{\otimes 3}$ at the order $k=2$, where
\[
\baray{ll}
\tilde{u}_1=\quad \, \, (0,0,0,0,0,1),&\rho_{1}= 1;\\
\tilde{u}_2=\frac{1}{\sqrt{2}}(1,1,0,0,0,0),&\rho_{2}=2\sqrt{2};\\
\tilde{u}_3=\frac{1}{\sqrt{2}}(1,0,1,0,0,0),&\rho_{3}=2\sqrt{2};\\
\tilde{u}_4=\frac{1}{\sqrt{2}}(1,0,0,1,0,0),&\rho_{4}= 2\sqrt{2};\\
\tilde{u}_5=\frac{1}{\sqrt{2}}(1,0,0,0,1,0),&\rho_{5}=2\sqrt{2}.\\
\earay
\]
Since  $x_0$-coordinate of $\tilde{u}_1$ is zero,
we  get $\mathcal{B}(0) \in cl(\mathrm{SCP}_3^6)$.
The computation took about $1.09$ seconds.

(iii) Consider the case $t=-1$. By  applying Algorithm \ref{alg2},
the relaxation \reff{eq5.4} is infeasible at the relaxation order $k=2$,
so we get $\mathcal{B}(-1) \notin \mathrm{SCP}_3^6$. 	
The computation took about $0.82$ second.
\end{exm}

In the following, we present some examples on the generalized truncated moment
problems with unbounded sets.

\begin{exm}
\noindent
(i)	Consider $K=\{x \in \r^6 : \|x\|^2 \geq 1 \}$,
$y \in \r^{\a}$ is listed as follows
\[
	y_{(0,0,0,0,0,0)}=t,~~y_{(2,0,0,0,0,0)}=\cdots=y_{(0,0,0,0,0,2)}=1.
\]
When $t> 6$, $y$ admits no $K$-representing measures.
This is because if $\mu$ is such a measure, we have
\[
6=\int_K x_1^2+\cdots+x_6^2 ~\mathrm{d} \mu \geq \int_K 1~ \mathrm{d} \mu =t,
\]
which is a contradiction. And if $t=6$, the measure $\nu=6\delta_{(\frac{1}{\sqrt{6}},\cdots, \frac{1}{\sqrt{6}})}$ is a $K$-representing measure for $y$. Hence, $y\in \rk$ when $t=6$.

Consider $t=6$, and we apply the Algorithm \ref{alg2}.
At the order $k=2$, we have that $\operatorname{rank} M_{1}(w^k)=\operatorname{rank} M_{2}(w^k)=1$
and obtain a $K$-representing measure
$6\delta_{(\frac{1}{\sqrt{6}},\cdots, \frac{1}{\sqrt{6}})}$ of $y$.
The computation took about $5.93$ seconds.
Next, we consider $t=7$.  At the order $k=2$, the relaxation \reff{eq5.4} is infeasible,
which gives a certificate that $y$ admits no $K$-representing measures when $t=7$.
It took about $ 4.23$ seconds.

\noindent
(ii) Consider $K=\{x \in \r^3 \mid  x_1x_2x_3(x_1+x_2+x_3-6)=0,\,x_i\geq 0,\,i=1,2,3\}$,
the index set $\a$ and $y \in \r^{\a}$ are listed as follows  	
\[
\begin{array}{cc|cc|cc}
	\alpha & y_{\alpha} & \alpha & y_{\alpha}  & \alpha & y_{\alpha}  \\
	\hline
	(1, 0, 0)&9&(0, 1 ,0)&15&(0 , 0 ,1)&18\\
	\hline
	(2, 0, 0)&9&(1, 1 ,0)&14&(0 , 2 ,0)&29\\	
	\hline
	(1, 1, 1)&24&(1, 2 ,0)&28&(1 , 0 ,2)&44\\	
\end{array}
\]
By applying Algorithm~\ref{alg2}, we get $\operatorname{rank} M_{1}(w^k)=\operatorname{rank} M_{3}(w^k)=5$
at the order $k=3$. We get a $5$-atomic  $K$-representing measure $\mu=\sum\limits_{i=1}^5 \lambda_{i}\delta_{u_i}$ of $y$ as follows
\[
\baray{ll}
u_1=( 4.8367 ,  0.0000 ,  -8.6952
),&\lambda_{1}= 0.0535;\\
u_2=( 0.0000 ,  -0.4954  ,  0.6272),&\lambda_{2}=4.4833;\\
u_3=( 0.0898 ,0.0000,    0.7193),&\lambda_{3}=11.5641;\\
u_4=( 1.4278 ,   2.2915  ,  2.2807),&\lambda_{4}=3.2163;\\
u_5=( 0.3530  ,  1.1177 ,  0.0000),&\lambda_{5}=  8.8139.\\
\earay
\]
The computation took about  $0.24$ second.

   	
\noindent
(iii) Consider $K=\{x \in \r^6 \mid x_i^2-x_ix_{i+1}+1=0,~i=1,\dots,5 \}$,
the index set $\a$ and $y \in \r^{\a}$ are listed as follows
\[
\begin{array}{cc|cc}
	\alpha & y_{\alpha} & \alpha & y_{\alpha}  \\
	\hline
	(4, 0, 0,0,0, 0)&1&(0, 4, 0,0,0,0)&3\\
	\hline (0, 0, 4,0,0,0)&6&(0, 0, 0,4,0,0)&10\\
	\hline (0, 0, 0,0,4,0)& 15&( 0, 0,0,0,0, 4)&21\\	
\end{array}
\]
Note that $y$ admits no $K$-representing measures.
Suppose otherwise $\mu$ is a $K$-representing measure of $y$, then we have
\[
3=\int_K x_2^4\mathrm{~d\mu}=\int_K (x_1+\frac{1}{x_1})^4\mathrm{~d\mu}\geq \int_K x_1^4+6\mathrm{~d\mu}=7,
\]
which is a contradiction. By applying  Algorithm \ref{alg2},
we have $\operatorname{rank} M_{2}(w^k)=\operatorname{rank} M_{3}(w^k)=6$
at the order $k=3$. The atoms of the finitely atomic measure $\nu$ for $w^{ *}|_{6}$ are
\[
\begin{array}{rr} \frac{1}{\sqrt{6}}(0, 1, 1, 1, 1, 1, 1),&
\frac{1}{\sqrt{5}}(0,0, 1, 1, 1, 1, 1),\\
\frac{1}{2} (0,0,0, 1, 1, 1, 1), &
\frac{1}{\sqrt{3}}(0,0,0,0, 1, 1, 1),\\
\frac{1}{\sqrt{2}}  (0,0,0,0,0, 1, 1), & (0,0,0,0,0,0,1).\\	
\end{array}
\]
Thus, we know that $y\in cl(\rk)$.
The computation took about $13.88$ seconds. \\
(iv) Let  $K=\mathbb{R}^{6}$.
One wonders whether there exists a measure $\mu$ supported in $K$ such that
\[
\int x_{i}^{2}  \mathrm{d} \mu=i\,(i=1,\dots,6), \int x_{i}x_{i+1} \mathrm{d} \mu=i^2\,(i=1,\dots,5),\,\int x_{1}x_{6} \mathrm{d} \mu=1.
\]	
By applying Algorithm \ref{alg2},
the relaxation $(\ref{eq5.4})$ is infeasible at the order $k=2$,
which gives a certificate that the system above is infeasible.
The computation took about $1.56$ seconds.


\noindent
(v)	Let  $K=\{x \in \mathbb{R}^{4} \mid \|x\|^2 \geq 1\}$.
We want to know whether there exists a measure $\mu$ supported in $K$ such that
	\[
		\int x_{1}^{3} x_{2}^{3} \mathrm{d} \mu =\int x_{2}^{3} x_{3}^{3} \mathrm{d} \mu=\int x_{3}^{3} x_{4}^{3} \mathrm{d} \mu=\int x_{4}^{3} x_{1}^{3} \mathrm{d} \mu,
	\]
	 \[
	 \int\left(x_{1}^{4} x_{2}^{2}+x_{2}^{4} x_{3}^{2}+x_{3}^{4} x_{4}^{2}+x_{4}^{4} x_{1}^{2}\right) \mathrm{d} \mu\geq4,\, \int\left(x_{1}^{6} +x_{2}^{6} +x_{3}^{6} +x_{4}^{6}\right) \mathrm{d} \mu\geq4,
	 \]
	 \[
	 \int (x_{1}^{2} x_{2}^{2} x_{3}^{2}+x_{2}^{2} x_{3}^{2} x_{4}^{2}+x_{3}^{2} x_{4}^{2} x_{1}^{2}+x_{4}^{2} x_{1}^{2} x_{2}^{2}) \mathrm{d} \mu\leq4.
	 \]		
By applying   Algorithm \ref{alg2}, we have that
$\operatorname{rank} M_{1}(w^k)=\operatorname{rank} M_{2}(w^k)=4$
at the order $k=4$, and the $x_0$-coordinates of all atoms are nonzero.
Thus, we get a $4$-atomic $K$-representing measure $\mu=\sum\limits_{i=1}^4 \lambda_{i}\delta_{u_i}$ of $y$ as follows
\[
\baray{ll}
u_1=( -0.1401 ,  -1.8084,   -1.3964,   -0.1014
),&\lambda_{1}= 0.0460;\\
u_2=( -1.8723  , -1.3077,   -0.1776  ,  0.0536),&\lambda_{2}= 0.0504;\\
u_3=(-1.4048 ,  -0.0263  , -0.0682  , -1.8086),&\lambda_{3}= 0.0452;\\
u_4=(-0.0154 ,  -0.0324  ,  1.8546 ,   1.3451),&\lambda_{4}=0.0477.\\
\earay
\]
The computation took about $320.71$ seconds.

\end{exm}

We would like to point out that Algorithm \ref{alg2} can be directly applied
to solve moment optimization problems  with unbounded sets of type \reff{eq5.2}.
This is discussed in the work \cite{2008A,Nie2015Linear}.
We give such an example of rational polynomial optimization.
Consider the rational polynomial optimization
\be \label{rational}
\left\{ \baray{rl}
\min & \frac{f(x)}{g(x)} \\
 \st  &   x \in K,
\earay  \right.
\ee
where  $f$, $g$ are polynomials. Here $K$ is as in \reff{1.1} and
$g(x)$ is assumed to be nonnegative on $K$.
It can be shown that \reff{rational} is equivalent to
\be \label{rationpol}
\left\{ \baray{rl}
 \min  & \langle f,y\rangle \\
 \st & \langle g,y\rangle=1 ,\\
  & y\in \mathscr{R}_d(K),
\earay  \right.
\ee
where $d=\max\{\deg(f),\deg(g)\}$.
Note that \reff{rationpol} is a generalized problem of moments.
Let $\hat{f}$, $\hat{g}$ be the degree-d homogenization of $f$, $g$ respectively. When $K$ is closed at $\infty$,
one can see that \reff{rationpol} is equivalent to
\be  \label{rationhom}
\left\{ \baray{rl}
\min &\langle \hat{f},w\rangle \\
\st  & \langle \hat{g},w\rangle=1 ,\\
&  w\in \mathscr{R}_d(\widetilde{K}) .
\earay  \right.
\ee
Algorithm~\ref{alg2} can be similarly applied to solve it.
The following is such an example.
\begin{exm}
Consider the problem
\[
\min_{x_1,x_2\geq0} \quad \frac{ x_{1}^{3}+x_{2}^{3}+3 x_{1} x_{2}+1 }
{ x_{1}\left(x_{2}^{2}+1\right)+x_{2}\left(x_{1}^{2}+1\right)+
\left(x_{1}^{2}+x_{2}^{2}\right)}.
\]
In this example,
\[
f=x_{1}^{3}+x_{2}^{3}+3 x_{1} x_{2}+1,  \quad
g=x_{1}\left(x_{2}^{2}+1\right)+x_{2}\left(x_{1}^{2}+1\right)+
\left(x_{1}^{2}+x_{2}^{2}\right).
\]
The optimal value is 1. We apply  the Moment-SOS relaxation \reff{eq5.4}-\reff{eq5.5}
to solve  \reff{rationhom}. At the order $k=2$,
the flat truncation \reff{flat:condi} is satisfied,
and the computed optimal value is $1$.
\end{exm}

\section{Conclusions and discussions}
\label{sc:con}

In this paper, we study the geometric properties of the moment cone
and the cone of nonnegative polynomials. Semidefinite relaxations are constructed
to approximate $\rk$ and $\pk$, and the convergence is
guaranteed under the assumptions that
$K$ is closed at $\infty$ and $\r[x]_{\a}\cap int(\mathscr{P}_{d}(K)) \ne \emptyset$.
Based on these relaxations, efficient algorithms are given for solving
the generalized truncated moment problem.
Note that we do not need  $K$ to be compact as the previous work,
while asymptotic  and finite convergence is guaranteed.
There are broad applications of generalized
moment optimization problems (see \cite{de,hny21, 2008A,Nie2015Linear}).

There is still much interesting future work to do.
How can one certificate whether or not
a multi-sequence $y\in \r^{\a}$ admits a $K$-representing measure
for the case that $y$ lies on the boundary of  $\mathscr{R}_{\a}(K)$?
When $\pk \cap int(\mathscr{P}_{d}(K)) = \emptyset $,
do we still have the same properties for the Moment-SOS relaxations?
How can we characterize the relative interior of $\pk$ when
it lies on the boundary of $\mathscr{P}_{d}(K)$?
These questions are mostly open for us. Last, if  $y\in \r^{\a}$
admits a $K$-representing measure, how can we get a finitely atomic measure with minimum length?
This problem is quite important for detecting different ranks of matrices and tensors.

\textbf{Acknowledgements.}
	Lei Huang and Ya-Xiang Yuan are partially supported by 
	the National Natural Science Foundation of China (No.~12288201).



\end{document}